\documentclass[11pt]{amsart}
\usepackage[utf8]{inputenc}
\usepackage{amssymb,amsmath}
\usepackage{graphicx}
\usepackage{color}
\usepackage{tikz}
\usepackage{pgfplots}
\usepackage{enumitem}
\usepackage{algorithm}
\usepackage{setspace}
\usepackage{algpseudocode}
\usepackage{mathtools}
\usepackage[hidelinks]{hyperref}
\usepackage{xfrac}

\usepackage{accents}
\usepackage{multicol} 
\setlength{\multicolsep}{3.0pt plus 1.0pt minus 0.75pt}
\usepackage{soul}
\usepackage{subcaption}
\newtheorem{theorem}{Theorem}

\theoremstyle{definition}

\newtheorem{example}[theorem]{Example}
\theoremstyle{lemma}
\newtheorem{lemma}[theorem]{Lemma}

\theoremstyle{remark}
\newtheorem{remark}[theorem]{Remark}

\newtheorem{assumption}[theorem]{Assumption}
\numberwithin{theorem}{section}
\numberwithin{equation}{section}
\numberwithin{table}{section}
\numberwithin{figure}{section}
%
%
\def\Vo{\V_{\ker}}
\def\calVker{\Vo}
\def\Vc{\V_\text{c}}

\def\cHo{\cH_{\ker}}

\def\calAo{\calA_{\ker}}
\def\uo{u_\text{ker}}
\def\uodot{\dot u_\text{ker}}

\def\uc{u_\text{c}}
\def\ucdot{\dot u_\text{c}}

\def\uon{u_{\text{ker},n}}
\def\uonn{u_{\text{ker},n+1}}
\def\uoEuler{u_{\text{ker},n+1}^\text{Eul}}

%

\def\I{\mathbb{I}}
\def\R{\mathbb{R}}
\def\A{\mathcal{A}}
\def\calA{\mathcal{A}}
 
\def\calB{\mathcal{B}}

\def\cH{\mathcal{H}}
\def\calH{\mathcal{H}}
\def\calI{\mathcal{I}}
\def\calK{\mathcal{K}}
\def\calL{\mathcal{L}}

\def\Q{\mathcal{Q}}
\def\calQ{\mathcal{Q}}

\def\V{\mathcal{V}}
\def\calV{\V}

\def\eps{\varepsilon}

\definecolor{blau}{RGB}{0, 51, 255}
\definecolor{hellblau}{RGB}{153, 204, 255}
\definecolor{hellrot}{RGB}{255, 0, 0}
\definecolor{firebrick}{RGB}{176, 34, 34} 
\definecolor{deep_pink}{RGB}{255, 20, 147} 
\definecolor{sky_blue}{RGB}{74, 112, 139}
\definecolor{slate_blue}{RGB}{71, 60, 139}
\definecolor{chartreuse}{RGB}{118, 238, 0}
\definecolor{chartreuseL}{RGB}{228, 255, 150}
\definecolor{light_blue}{RGB}{178, 223, 238}
\definecolor{dodge_blue}{RGB}{17, 78, 138}
\definecolor{code_backg}{RGB}{238, 216, 174}
\definecolor{myBlue1}{RGB}{101,149,239}  
\definecolor{myBlue2}{RGB}{113,104,238} 
\definecolor{myBlue3}{RGB}{30,144,255} 
\definecolor{myGreen1}{RGB}{154,204,50} 
\definecolor{myGreen2}{RGB}{69,169,0} 
\definecolor{myGreen3}{RGB}{154,205,50} 
\definecolor{myGreen4}{RGB}{105,139,34} 
\definecolor{myRed1}{RGB}{210,105,30} 
\definecolor{myRed2}{RGB}{165,42,42} 
\definecolor{myRed3}{RGB}{139,26,26} 
\definecolor{myLGray}{RGB}{225,225,225} 

\DeclareMathOperator{\id}{id}

\DeclareMathOperator{\image}{im}

\newcommand{\ds}{\ensuremath{\, \mathrm{d}s }}
\newcommand{\dt}{\ensuremath{\, \mathrm{d}t }}
\newcommand{\deta}{\ensuremath{\, \mathrm{d}\eta }}

\newcommand{\ddt}[1][t]{\ensuremath{\frac{\mathrm{d}}{\mathrm{d}#1}}}
\newcommand{\ddts}[1][t]{\ensuremath{\tfrac{\mathrm{d}}{\mathrm{d}#1}}}
\newcommand{\ddss}[1][s]{\ensuremath{\tfrac{\mathrm{d}}{\mathrm{d}#1}}}
\newcommand{\hook}{\ensuremath{\hookrightarrow}}
\newcommand{\Du}{\triangle u}
\newcommand{\Dl}{\triangle \lambda}
\textheight=215mm
\textwidth=150mm
\evensidemargin=30.0mm
\oddsidemargin=30.0mm
\topmargin=-1mm
\hoffset=-25.4mm
\allowdisplaybreaks
\begin{document}
\title[Exponential Integrators for Semi-linear Parabolic PDAEs]{Exponential Integrators for Semi-linear Parabolic Problems with Linear Constraints}
\author[]{R.~Altmann$^\dagger$, C.~Zimmer$^{\ddagger}$}
\address{${}^{\dagger}$ Department of Mathematics, University of Augsburg, Universit\"atsstr.~14, 86159 Augsburg, Germany}
\address{${}^{\ddagger}$ Institute of Mathematics MA\,{}4-5, Technical University Berlin, Stra\ss e des 17.~Juni 136, 10623 Berlin, Germany}
\email{robert.altmann@math.uni-augsburg.de, zimmer@math.tu-berlin.de}
\thanks{C.~Zimmer acknowledges support by the Deutsche Forschungsgemeinschaft (DFG, German Research Foundation) within the SFB 910, project number 163436311.}
\date{\today}
\keywords{}
\begin{abstract}
This paper is devoted to the construction of exponential integrators of first and second order for the time discretization of constrained parabolic systems. For this extend, we combine well-known exponential integrators for unconstrained systems with the solution of certain saddle point problems in order to meet the constraints throughout the integration process. The result is a novel class of semi-explicit time integration schemes. 
We prove the expected convergence rates and illustrate the performance on two numerical examples including a parabolic equation with nonlinear dynamic boundary conditions. 
\end{abstract}
%
%
\maketitle
%
{\tiny {\bf Key words.} PDAE, exponential integrator, parabolic equations, time discretization}\\
\indent
{\tiny {\bf AMS subject classifications.}  {\bf 65M12}, {\bf 65J15}, {\bf 65L80}} 
%
%
%
\section{Introduction}
Exponential integrators provide a powerful tool for the time integration of stiff ordinary differential equations as well as parabolic partial differential equations (PDE), cf.~\cite{Cer60, Law67, HocO10}. Such integrators are based on the possibility to solve the linear part -- which is responsible for the stiffness of the system -- in an exact manner. As a result, large time steps are possible which makes the method well-suited for time stepping, especially for parabolic systems where CFL conditions may be very restrictive. 
%
For semi-linear ODEs and parabolic PDEs exponential integrators are well-studied in the literature. This includes explicit and implicit exponential Runge-Kutta methods~\cite{CoxM02,HocO05,HocO05b}, exponential Runge-Kutta methods of high order~\cite{LuaO14}, exponential Rosenbrock-type methods~\cite{HocOS09}, and multistep exponential integrators~\cite{CalP06}. \medskip

In this paper, we construct and analyze exponential integrators for parabolic PDEs which underlie an additional (linear) constraint. This means that we aim to approximate the solution to 
\[
 \dot u(t) + \calA u(t)
 = f(t, u)
\]
which at the same time satisfies a constraint of the form $\calB u(t) = g(t)$. Such systems can be considered as differential-algebraic equations (DAEs) in Banach spaces, also called partial differential-algebraic equations (PDAEs), cf.~\cite{EmmM13, LamMT13, Alt15}. PDAEs of parabolic type include the transient Stokes problem (where $\calB$ equals the divergence operator) as well as problems with nontrivial boundary conditions (with $\calB$ being the trace operator). On the other hand, PDAEs of hyperbolic type appear in the modeling of gas and water networks~\cite{JanT14, EggKLMM18, AltZ18ppt} and in elastic multibody modeling~\cite{Sim98}. 

To the best of our knowledge, exponential integrators have not been considered for PDAEs so far. In the finite-dimensional case, however, exponential integrators have been analyzed for DAEs of (differential) index~1~\cite{HocLS98}. We emphasize that the parabolic PDAEs within this paper generalize index-2 DAEs in the sense that a standard spatial discretization by finite elements leads to DAEs of index 2. 
Known time stepping methods for the here considered parabolic PDAEs include splitting methods~\cite{AltO17}, algebraically stable Runge-Kutta methods~\cite{AltZ18}, and discontinuous Galerkin methods~\cite{VouR18}. 

In the first part of the paper we discuss the existence and uniqueness of solutions for semi-linear PDAEs of parabolic type with linear constraints. Afterwards, we propose two exponential integrators of first and second order for such systems. The construction of this novel class of time integration schemes benefits from the interplay of well-known time integration schemes for unconstrained systems and stationary saddle point problems in order to meet the constraints. Since the latter is done in an implicit manner, the combination with explicit schemes for the dynamical part of the system leads to so-called {\em semi-explicit} time integration schemes. As exponential integrators are based on the exact evaluation of semigroups, we need to extend this to the constrained case. The proper equivalent is the solution of a homogeneous but transient saddle point problem, which is a linear PDAE. 

The resulting exponential Euler scheme requires the solution of three stationary and a single transient saddle point problem in each time step. All these systems are linear, require in total only one evaluation of the nonlinear function, and do not call for another linearization step. Further, the transient system is homogeneous such that it can be solved without an additional regularization (or index reduction in the finite-dimensional case). 
The corresponding second-order scheme requires the solution of additional saddle point problems. Nevertheless, all these systems are linear and easy to solve. In a similar manner -- but under additional regularity assumptions -- one may translate more general exponential Runge-Kutta schemes to the constrained case. Here, however, we restrict ourselves to schemes of first and second order.  \medskip

The paper is organized as follows. In Section~\ref{sec:prelim} we recall the most important properties of exponential integrators for parabolic problems in the unconstrained case. Further, we introduce the here considered parabolic PDAEs, summarize all needed assumptions, and analyze the existence of solutions.  
The exponential Euler method is then subject of Section~\ref{sec:Euler}. Here we discuss two approaches to tackle the occurrence of constraints and prove first-order convergence. An exponential integrator of second order is then introduced and analyzed in Section~\ref{sec:secondOrd}. Depending on the nonlinearity, this scheme converges with order $2$ or reduced order~$\sfrac 32$. Comments on the efficient computation and numerical experiments for semi-linear parabolic systems illustrating the obtained convergence results are presented in Section~\ref{sec:numerics}.
%
%
\section{Preliminaries}\label{sec:prelim}
In this preliminary section we recall basic properties of exponential integrators when applied to PDEs of parabolic type. For this (and the later analysis) we consider the well-known~$\varphi$ functions. Further, we introduce the precise setting for the here considered parabolic systems with constraints and discuss their solvability. 
%
%
\subsection{Exponential integrators for parabolic problems}
As exponential integrators are based on the exact solution of linear homogeneous problems, we consider the recursively defined~$\varphi$-functions, see, e.g.~\cite[Ch.~11.1]{StrWP12}, 
\begin{align}
\label{eqn:phifunctions}
  \varphi_0(z) := e^z, \qquad
  \varphi_{k+1}(z) := \frac{\varphi_k(z) - \varphi_k(0)}{z}. 
\end{align}
For $z=0$ the values are given by~$\varphi_k(0) = 1/k!$. The importance of the~$\varphi$-functions comes from the fact that they can be equivalently written as integrals of certain exponentials. More precisely, we have for $k\ge 1$ that 
\[
  \varphi_{k}(z) 
  = \int_0^1 e^{(1-s)z} \frac{s^{k-1}}{(k-1)!} \ds. 
\]
We will consider these functions in combination with differential operators. For a bounded and invertible operator~$\calA\colon X\to X$ where $e^{t\calA} := \exp(t\calA)$ is well-defined, we can directly use the formulae in~\eqref{eqn:phifunctions} using the notion~$\frac 1\calA = \calA^{-1}$. 
As a result, the exact solution of a linear abstract ODE with polynomial right-hand side can be expressed in terms of~$\varphi_k$. More precisely, the solution of 
\begin{align}
\label{eqn:abstractODE}
  \dot u(t) + \calA u(t) 
  = \sum_{k=1}^n \frac{f_k}{(k-1)!}\, t^{k-1} \in X
\end{align}
with initial condition $u(0) = u_0$ and coefficients~$f_k\in X$ is given by
\begin{align}
\label{eqn:abstractODE:sln}
  u(t) 
  = \varphi_0(-t\calA)\, u_0 +  \sum_{k=1}^n \varphi_k(-t\calA)\, f_k\, t^k.
\end{align}
%
If $-\calA\colon D(\calA)\subset X \to X$ is an unbounded differential operator which generates a strongly continuous semigroup, then we obtain the following major property for the corresponding~$\varphi$-functions.  
\begin{theorem}[cf.~{\cite[Lem.~2.4]{HocO10}}]
Assume that the linear operator~$-\calA$ is the infinitesimal generator of a strongly continuous semigroup~$e^{-t\calA}$. Then, the operators~$\varphi_k(-\tau\calA)$ are linear and bounded in $X$. 
\end{theorem}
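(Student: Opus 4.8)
The plan is to bypass the recursion in~\eqref{eqn:phifunctions} entirely, since it divides by~$z$ and thus implicitly by~$\calA^{-1}$, an operation unavailable for an unbounded (and possibly non-invertible) generator. Instead I would adopt the integral representation as the defining formula in the operator setting: for $k\ge 1$ and $\tau\ge 0$ set
\[
  \varphi_k(-\tau\calA)\, x
  := \int_0^1 e^{-(1-s)\tau\calA}\, x\, \frac{s^{k-1}}{(k-1)!} \ds,
  \qquad x\in X,
\]
while for $k=0$ the operator $\varphi_0(-\tau\calA) = e^{-\tau\calA}$ is simply a member of the semigroup. The goal is then to verify that the right-hand side produces a well-defined bounded linear operator on~$X$.

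The first step is to recall the standard growth estimate for a strongly continuous semigroup: there exist constants $M\ge 1$ and $\omega\in\R$ such that $\|e^{-t\calA}\| \le M\, e^{\omega t}$ holds for every $t\ge 0$. For $k=0$ this immediately gives boundedness of $\varphi_0(-\tau\calA)$. For $k\ge 1$ I would next argue that the integrand is well-behaved: by strong continuity, the map $s\mapsto e^{-(1-s)\tau\calA}\,x$ is a continuous $X$-valued function on the compact interval~$[0,1]$ for each fixed $x\in X$, so the Bochner integral above exists. Linearity of $\varphi_k(-\tau\calA)$ is then inherited from the linearity of each semigroup operator together with the linearity of the integral.

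Boundedness follows by pulling the norm inside the integral and applying the growth bound,
\[
  \|\varphi_k(-\tau\calA)\, x\|
  \le \frac{M}{(k-1)!} \int_0^1 e^{\omega(1-s)\tau}\, s^{k-1} \ds \; \|x\|,
\]
where the remaining integral is finite because its integrand is continuous on~$[0,1]$; this yields an explicit bound on the operator norm $\|\varphi_k(-\tau\calA)\|$.

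The main obstacle is not a single hard estimate but rather the conceptual point that the recursive formula must be replaced by the integral representation before any of the above makes sense: one has to be sure that the Bochner integral is the right object, i.e.\ that $s\mapsto e^{-(1-s)\tau\calA}x$ is strongly measurable and integrable, which is exactly where the strong continuity hypothesis enters. Once this representation is in place, the remaining work is the routine norm estimate above.
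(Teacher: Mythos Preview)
Your argument is correct and is essentially the standard proof of this fact: replace the recursion by the integral representation, use strong continuity to guarantee the Bochner integral exists, and bound the norm via the exponential growth estimate for the semigroup. The paper itself does not prove this statement but merely cites~\cite[Lem.~2.4]{HocO10}, where the proof follows exactly the line you outline.
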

With the interpretation of the exponential as the corresponding semigroup, the solution formula for bounded operators~\eqref{eqn:abstractODE:sln} directly translates to linear parabolic PDEs of the form~\eqref{eqn:abstractODE} with an unbounded differential operator~$\calA$, cf.~\cite{HocO10}. 

The construction of exponential integrators for $\dot u(t) + \calA u(t) =f(t,u)$ is now based on the replacement of the nonlinearity~$f$ by a polynomial and~\eqref{eqn:abstractODE:sln}. Considering the interpolation polynomial of degree 0, i.e., evaluating the nonlinearity only in the starting value of $u$, we obtain the {\em exponential Euler scheme}. The corresponding scheme for constrained systems is discussed in Section~\ref{sec:Euler} and a second-order scheme in Section~\ref{sec:secondOrd}.
%
%
\subsection{Parabolic problems with constraints}\label{sec:prelim:PDAE}
In this subsection, we introduce the constrained parabolic systems of interest and gather assumptions on the involved operators. Throughout this paper we consider semi-explicit and semi-linear systems meaning that the constraints are linear and that the nonlinearity only appears in the low-order terms of the dynamic equation. Thus, we consider the following parabolic PDAE: 
find~$u\colon[0,T] \to \V$ and~$\lambda\colon[0,T] \to \Q$ such that   
\begin{subequations}
\label{eqn:PDAE}
\begin{alignat}{5}
	\dot{u}(t)&\ +\ &\calA u(t)&\ +\ &\calB^\ast \lambda(t)\, &= f(t, u) &&\qquad\text{in } \V^* \label{eqn:PDAE:a},\\
	& &\calB u(t)& & &= g(t) &&\qquad\text{in } \Q^*.
	\label{eqn:PDAE:b}
\end{alignat}
\end{subequations}
Therein, $\V$ and $\Q$ denote Hilbert spaces with respective duals~$\V^*$ and $\Q^*$. The space $\V$ is part of a Gelfand triple~$\V, \cH, \V^*$, cf.~\cite[Ch.~23.4]{Zei90a}. This means that~$\V$ is continuously (and densely) embedded in the pivot space~$\cH$ which implies~$\cH^* \hook \V^*$, i.e., the continuous embedding of the corresponding dual spaces. In this setting, the Hilbert space~$\cH$ is the natural space for the initial data. Note, however, that the initial condition may underlie a consistency condition due to the constraint~\eqref{eqn:PDAE:b}, cf.~\cite{EmmM13}. For the here considered analysis we assume slightly more regularity, namely~$u(0)=u_0 \in\V$, and consistency of the form~$\calB u_0 = g(0)$. 

The assumptions on the operators $\calA \in \calL(\V,\V^*)$ and $\calB \in \calL(\V,\Q^*)$ are summarized in the following. 
%
\begin{assumption}[constraint operator $\calB$]
\label{assB}
The operator~$\calB\colon \V \to \Q^*$ is linear, continuous, and satisfies an inf-sup condition, i.e., there exists a constant~$\beta>0$ such that 
\[
\adjustlimits \inf_{q\in\Q\setminus\{ 0 \} }\sup_{v\in\V\setminus\{0\}}
\frac{\langle \calB v, q\rangle}{\Vert v\Vert_\V \Vert q\Vert_\Q}
\ge \beta. 
\]
\end{assumption}
\begin{assumption}[differential operator $\calA$]
\label{assA}
The linear and continuous operator~$\calA\colon \V\to\V^*$ has the form~$\calA = \calA_1+\calA_2$ with~$\calA_1 \in \calL(\V,\V^*)$ being self-adjoint and~$\calA_2\in \calL(\V,\cH)$. Further, we assume that~$\calA$ is elliptic on~$\Vo := \ker\calB$, i.e., on the kernel of the constraint operator. 
\end{assumption}
Without loss of generality, we may assume under Assumption~\ref{assA} that~$\calA_1$ is elliptic on~$\Vo$. This can be seen as follows: With~$\mu_{\calA}$ denoting the ellipticity constant of $\calA$ and~$c_{\calA_2}$ the continuity constant of $\calA_2$, we set 
\[
  \calA_1 \leftarrow \calA_1 + \frac{c_{\calA_2}^2}{2 \mu_{\calA}} \id_{\calH}
  \quad\text{and}\quad
  \calA_2 \leftarrow \calA_2 - \frac{c_{\calA_2}^2}{2 \mu_{\calA}} \id_{\calH}.
\]
This then implies 
\begin{align*}
\langle \calA_1 v_{\ker},v_{\ker}\rangle \geq \mu_{\calA}\|v_{\ker}\|^2_{\calV} - c_{\calA_2} \|v_{\ker}\|_{\calV} \|v_{\ker}\|_{\calH} + \frac{c_{\calA_2}^2}{2 \mu_{\calA}} \|v_{\ker}\|^2_{\calH} \geq \frac{\mu_{\calA}}{2}\|v_{\ker}\|^2_{\calV}
\end{align*}
for all $v_{\ker} \in \calVker$. Hence, we assume throughout this paper that, given Assumption~\ref{assA}, $\calA_1$ is elliptic on~$\Vo$. As a result, $\calA_1$ induces a norm which is equivalent to the $\V$-norm on~$\Vo$, i.e., 
\begin{equation}
\label{eqn:norm_equivalence}
  \mu\, \|v_{\ker}\|^2_{\calV} 
  \leq \|v_{\ker}\|^2_{\calA_1} \leq 
  C\, \|v_{\ker}\|^2_{\calV}.
\end{equation}
\begin{remark}
The results of this paper can be extended to the case where $\calA$ only satisfies a G\aa{}rding inequality on~$\Vo$. In this case, we add to $\calA$ the term $\kappa\, \id_{\calH}$ such that $\calA + \kappa\, \id_{\calH}$ is elliptic on~$\Vo$ and add it accordingly to the nonlinearity~$f$. 
\end{remark}
Assumption~\ref{assB} implies that~$\calB$ is onto such that there exists a right-inverse denoted by~$\calB^-\colon\Q^*\to\V$. This in turn motivates the decomposition
\[
  \V = \Vo \oplus \Vc \qquad\text{with}\qquad
  \Vo = \ker\calB,\quad 
  \Vc = \image\calB^-.
\]   
We emphasize that the choice of the right-inverse (and respectively~$\Vc$) is, in general, not unique and allows a certain freedom in the modeling process. Within this paper, we define the complementary space $\Vc$ as in~\cite{AltZ18} in terms of the annihilator of $\Vo$, i.e., 
\[
  \Vc 
  := \{ v\in\V\, |\, \calA v \in \Vo^0 \}
  = \{ v\in\V\, |\, \langle \calA v, w\rangle = 0 \text{ for all } w\in \Vo \}.
\]

The analysis of constrained systems such as~\eqref{eqn:PDAE} is heavily based on the mentioned decomposition of~$\V$. Furthermore, we need the restriction of the differential operator to the kernel of~$\calB$, i.e., 
\[
  \calAo := \calA|_{\Vo} \colon \Vo\to\Vo^* := (\Vo)^*. 
\]
Note that we use here the fact that functionals in~$\V^*$ define functionals in~$\Vo^*$ simply through the restriction to~$\Vo$. The closure of~$\Vo$ in the~$\cH$-norm is denoted by~$\cHo := \overline{\Vo}^\cH$. Assumption~\ref{assA} now states that~$\calAo$ is an elliptic operator. This in turn implies that~$-\calAo$ generates an analytic semigroup on $\cHo$, see \cite[Ch.~7, Th.~2.7]{Paz83}. 

Finally, we need assumptions on the nonlinearity~$f$. Here, we require certain smoothness properties such as local Lipschitz continuity in the second component. The precise assumptions will be given in the respective theorems. 
\begin{example}
\label{exp:dynBC}
The (weak) formulation of semi-linear parabolic equations with dynamical (or Wentzell) boundary conditions~\cite{SprW10} fit into the given framework. For this, the system needs to be formulated as a coupled system which leads to the PDAE structure~\eqref{eqn:PDAE}, cf.~\cite{Alt19}. We emphasize that also the boundary condition may include nonlinear reaction terms. 
We will consider this example in the numerical experiments of Section~\ref{sec:numerics}. 
\end{example}
%
%
\subsection{Existence of solutions}
In this section we discuss the existence of solutions to~\eqref{eqn:PDAE}, where we use the notion of Sobolev-Bochner spaces~$L^2(0,T;X)$ and~$H^1(0,T;X)$ for a Banach space $X$, cf.~\cite[Ch.~23]{Zei90a}. For the case that $f$ is independent of~$u$, the existence of solutions is well-studied, see~\cite{Tar06,EmmM13,Alt15}. We recall the corresponding result in the special of $\calA$ being self-adjoint, which is needed in later proofs.
\begin{lemma}
\label{lem:PDAE_f_independent}
Let $\calA\in \calL(\calV,\calV^\ast)$ be self-adoint and elliptic on $\calVker$ and let $\calB$ satisfy Assumption~\ref{assB}. Further, assume $f \in L^2(0,T;\calH)$, $g \in H^1(0,T;\calQ^\ast)$, and~$u_0 \in \calV$ with $\calB u_0=g(0)$. Then, the PDAE~\eqref{eqn:PDAE} with right-hand sides $f$, $g$ -- independent of~$u$ -- and initial value $u_0$ has a unique solution
\begin{align*}
 u&\in C(0,T;\calV) \cap H^1(0,T;\calH), &\lambda &\in L^2(0,T;\calQ)
\end{align*}
with $u(0)=u^0$. The solution depends continuously on the data and satisfies 
\begin{equation}
\label{eqn:PDAE_f_independent}
 \|u(t) -\calB^- g(t)\|_{\calA}^2 
 \ \leq\ \|u_0-\calB^- g(0)\|_{\calA}^2 + \int_0^t \|f(s) - \calB^- \dot g(s)\|^2_{\calH} \ds.
\end{equation}
\end{lemma}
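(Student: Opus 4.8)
The plan is to remove the constraint by a lifting and reduce the saddle point problem to a standard parabolic equation on the kernel $\calVker$. Writing $w := u - \calB^- g$, the consistency $\calB u_0 = g(0)$ together with $\calB \calB^- = \id$ gives $w(t) \in \calVker$ for all $t$ and $w(0) = u_0 - \calB^- g(0) \in \calVker$. Testing the dynamic equation~\eqref{eqn:PDAE:a} against $v \in \calVker$ eliminates the multiplier, since $\langle \calB^\ast \lambda, v\rangle = \langle \lambda, \calB v\rangle = 0$, and eliminates the lifting of $\calA$, since $\calB^- g \in \calVc$ satisfies $\langle \calA \calB^- g, v\rangle = 0$ by the annihilator characterisation of $\calVc$. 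This leaves the reduced problem on the Gelfand triple $\calVker \hook \calHker \hook \calVker^*$,
\[
  \langle \dot w, v\rangle + \langle \calAo w, v\rangle = \langle \tilde f, v\rangle \qquad \text{for all } v\in\calVker,
\]
with right-hand side $\tilde f := f - \calB^- \dot g$. Since $f \in L^2(0,T;\calH)$ and $\calB^- \dot g \in L^2(0,T;\calV) \hook L^2(0,T;\calH)$, we have $\tilde f \in L^2(0,T;\calH)$, and $\calAo$ is self-adjoint and elliptic by Assumption~\ref{assA}.

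Next I would solve the reduced equation by a Galerkin scheme in subspaces $\calV_m \subseteq \calVker$ and, crucially, derive the energy estimate at the discrete level by testing with $\dot w_m$ rather than with $w_m$. Because $\calA$ is self-adjoint, $\langle \calAo w_m, \dot w_m\rangle = \tfrac12 \tfrac{\dif}{\dif t}\|w_m\|_\calA^2$, so that
\[
  \|\dot w_m\|_\calH^2 + \tfrac12 \tfrac{\dif}{\dif t}\|w_m\|_\calA^2 = (\tilde f, \dot w_m)_\calH \le \tfrac12 \|\tilde f\|_\calH^2 + \tfrac12 \|\dot w_m\|_\calH^2 .
\]
Absorbing $\tfrac12\|\dot w_m\|_\calH^2$ and integrating yields $\|w_m(t)\|_\calA^2 \le \|w_m(0)\|_\calA^2 + \int_0^t \|\tilde f\|_\calH^2 \ds$ together with a uniform bound on $\dot w_m$ in $L^2(0,T;\calHker)$. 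By the norm equivalence~\eqref{eqn:norm_equivalence} these give a uniform bound on $w_m$ in $L^\infty(0,T;\calVker)$, and passing to a weak-$\ast$ limit produces $w$ solving the reduced equation with $\dot w \in L^2(0,T;\calHker)$. Since $\dot w \in L^2(0,T;\calHker)$, the reduced equation forces $\calAo w \in L^2(0,T;\calHker)$, i.e.\ $w \in L^2(0,T;D(\calAo))$; the interpolation identity $[\,D(\calAo),\calHker\,]_{1/2} = \calVker$ for a self-adjoint elliptic operator then gives $w \in C(0,T;\calVker)$. Weak lower semicontinuity of $\|\cdot\|_\calA$ transfers the discrete estimate to the limit, which is precisely~\eqref{eqn:PDAE_f_independent} after undoing the substitution $w = u - \calB^- g$. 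Consequently $u = w + \calB^- g \in C(0,T;\calV)\cap H^1(0,T;\calH)$.

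To recover the multiplier I would return to~\eqref{eqn:PDAE:a} and read off $\calB^\ast \lambda = f - \dot u - \calA u$. The right-hand side lies in $L^2(0,T;\calV^*)$, and because $w$ solves the reduced equation it annihilates $\calVker$, i.e.\ it takes values in $\calVker^0 = \image \calB^\ast$ (closed range, guaranteed by Assumption~\ref{assB}). The inf-sup condition makes $\calB^\ast \colon \calQ \to \calVker^0$ a topological isomorphism with $\|\lambda\|_\calQ \le \beta^{-1}\|\calB^\ast\lambda\|_{\calV^*}$, which yields a unique $\lambda \in L^2(0,T;\calQ)$. Uniqueness of $(u,\lambda)$ follows by linearity: for zero data the energy estimate forces $w=0$, hence $u=0$, and then the inf-sup bound forces $\lambda = 0$. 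Continuous dependence on $(u_0,f,g)$ is read directly off~\eqref{eqn:PDAE_f_independent} for $u$ and off the inf-sup bound for $\lambda$.

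The main obstacle is the rigorous justification of the estimate obtained by testing with $\dot w$, since the regularity $\dot w \in L^2(0,T;\calHker)$ is not available from the weak formulation a priori; this is exactly why the estimate must first be established for the Galerkin approximations --- where testing with $\dot w_m \in \calV_m$ is legitimate --- and only then passed to the limit. The self-adjointness of $\calA$ is indispensable here, as it is what turns $\langle \calA w, \dot w\rangle$ into the exact time derivative $\tfrac12 \tfrac{\dif}{\dif t}\|w\|_\calA^2$; without it one would only obtain the weaker $\calH$-based estimate, and the regularity $u \in C(0,T;\calV)$ would be in doubt.
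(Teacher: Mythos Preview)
Your proposal is correct and follows the classical route that the paper defers to its references (Tartar's Lemma~21.1 and the thesis~\cite{Zim15}): reduce to the kernel via the lifting $w = u - \calB^- g$, run a Galerkin scheme on $\calVker$, test with $\dot w_m$ to exploit the self-adjointness of $\calA$ and obtain the $\|\cdot\|_\calA$-energy estimate together with $\dot w \in L^2(0,T;\calHker)$, and then recover $\lambda$ from the inf-sup condition. The interpolation step $[D(\calAo),\calHker]_{1/2}=\calVker$ you invoke for $C(0,T;\calVker)$-regularity is exactly the square-root identification the paper alludes to later, so there is no discrepancy in spirit or in substance.
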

\begin{proof}
A sketch of the proof can be found in~\cite[Lem.~21.1]{Tar06}. For more details we refer to~\cite[Ch.~3.1.2.2]{Zim15}.
\end{proof}
In order to transfer the results of Lemma~\ref{lem:PDAE_f_independent} to the semi-linear PDAE~\eqref{eqn:PDAE} we need to reinterpret the nonlinearity $f\colon [0,T]\times \calV \to \calH$ as a function which maps an abstract measurable function $u\colon [0,T] \to \calV$ to  
$f(\,\cdot\,,u(\,\cdot\,))\colon [0,T] \to \calH$. For this, we need the classical Carath\'e{}odory condition, see \cite[Rem.~1]{GolKT92}, i.e.,  
\begin{enumerate}[label=\roman*.)]
	\item $v \mapsto f(t, v)$ is a continuous function for almost all $t \in [0,T]$, \label{item:Caratheodory_1}
	\item $t \mapsto f(t, v)$ is a measurable function for all $v\in \calV$.\label{item:Caratheodory_2}
\end{enumerate}
Furthermore, we need a boundedness condition such that the Nemytskii map induced by~$f$ maps $C([0,T];\calV)$ to $L^2(0,T;\calH)$. We will assume in the following that there exists a function~$k \in L^2(0,T)$ such that 
\begin{equation}
\label{eqn:f_bound}
\|f(t,v)\|_{\calH} \leq k(t)(1+\|v\|_{\calV})
\end{equation}
for all $v \in \calV$ and almost all $t\in [0,T]$. We emphasize that condition~\eqref{eqn:f_bound} is sufficient but not necessary for~$f$ to induce a Nemytskii map, cf.~\cite[Th.~1(ii)]{GolKT92}. We will use this condition to prove the existence and uniqueness of a global solution to~\eqref{eqn:PDAE}.

\begin{theorem}
\label{thm:existence_uniqueness_PDAE}
Assume that $\calA$ and $\calB$ satisfy Assumptions~\ref{assB} and~\ref{assA}. Let~$g \in H^1(0,T;\calQ^\ast)$ and suppose that $f\colon [0,T] \times \calV \to \calH$ satisfies the Carath\'e{}odory conditions as well as the uniform bound~\eqref{eqn:f_bound}. In addition, for every $v\in \calV$ there exists an open ball~$B_r(v) \subseteq \calV$ with radius $r=r(v)>0$ and a constant $L=L(v)\geq 0$, such that for almost every $t\in [0,T]$ it holds that
\begin{equation}
\label{eqn:f_lipschitz}
\|f(t,v_1) - f(t,v_2)\|_{\calH} 
\leq L\, \|v_1 -v_2\|_{\calV}
\end{equation}
for all $v_1,v_2 \in B_r(v)$. Then, for a consistent initial value $u_0 \in \calV$, i.e., $\calB u_0 = g(0)$, the semi-linear PDAE~\eqref{eqn:PDAE} has a unique solution
\begin{align*}
 u&\in C(0,T;\calV) \cap H^1(0,T;\calH), &\lambda &\in L^2(0,T;\calQ)
\end{align*}
with $u(0)=u^0$.
\end{theorem}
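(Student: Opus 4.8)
The plan is to reformulate~\eqref{eqn:PDAE} as a fixed-point equation and to apply Banach's fixed-point theorem on a short time interval, before extending the local solution to all of~$[0,T]$ by an a~priori estimate. The crucial observation is that the linear result of Lemma~\ref{lem:PDAE_f_independent} is available for the \emph{self-adjoint} part~$\calA_1$ of~$\calA$, which (after the reduction preceding Assumption~\ref{assA}) is self-adjoint and elliptic on~$\calVker$. Since $\calA_2 \in \calL(\calV,\calH)$ maps into the pivot space, I would move the term $\calA_2 u$ together with the nonlinearity to the right-hand side. Concretely, for an input~$w$ in a suitable subset of $C(0,T^\ast;\calV)$, let $\Phi(w) := u$ denote the unique solution of the linear PDAE with operator~$\calA_1$, constraint datum~$g$, initial value~$u_0$, and right-hand side
\begin{align*}
 F_w(t) := f(t, w(t)) - \calA_2\, w(t) \in \calH.
\end{align*}
By the Carath\'eodory conditions together with the growth bound~\eqref{eqn:f_bound} and the continuity of~$\calA_2$, one checks that $F_w \in L^2(0,T^\ast;\calH)$, so that $\Phi$ is well-defined and maps into $C(0,T^\ast;\calV) \cap H^1(0,T^\ast;\calH)$ with $\Phi(w)(0)=u_0$ by Lemma~\ref{lem:PDAE_f_independent}; a fixed point $u=\Phi(u)$, together with the associated multiplier, is then precisely a solution of~\eqref{eqn:PDAE}.

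For the contraction estimate I would exploit that the difference of two outputs solves a linear PDAE with homogeneous constraint and vanishing initial value. Given inputs $w_1,w_2$, the difference $\Phi(w_1)-\Phi(w_2)$ satisfies $\calB(\Phi(w_1)-\Phi(w_2))=0$, hence lies in $\calVker$ for every~$t$, and the energy estimate~\eqref{eqn:PDAE_f_independent} (applied with operator $\calA_1$ and $g\equiv 0$) combined with the norm equivalence~\eqref{eqn:norm_equivalence} yields
\begin{align*}
 \mu\, \|\Phi(w_1)(t) - \Phi(w_2)(t)\|_{\calV}^2
 \leq \int_0^t \|F_{w_1}(s) - F_{w_2}(s)\|_{\calH}^2 \ds.
\end{align*}
To bound the integrand I would restrict $\Phi$ to the closed ball $\overline{B} := \{\, w \in C(0,T^\ast;\calV) : \|w(t) - u_0\|_{\calV} \leq \rho \text{ for all } t \,\}$ with $\rho < r(u_0)$, so that the local Lipschitz condition~\eqref{eqn:f_lipschitz} applies with the single constant $L=L(u_0)$; together with the continuity constant $c_{\calA_2}$ this gives $\|F_{w_1}(s)-F_{w_2}(s)\|_{\calH} \leq (L+c_{\calA_2})\,\|w_1(s)-w_2(s)\|_{\calV}$. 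Taking the supremum over $t\in[0,T^\ast]$ shows that $\Phi$ is a contraction on $\overline B$ as soon as $T^\ast (L+c_{\calA_2})^2/\mu < 1$.

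The step I expect to be the main obstacle is verifying that $\Phi$ maps $\overline{B}$ into itself, since~\eqref{eqn:PDAE_f_independent} controls the distance of $\Phi(w)$ to $\calB^- g$ in the $\calA_1$-norm rather than the distance to $u_0$ in the $\calV$-norm. I would obtain the required closeness $\|\Phi(w)(t)-u_0\|_{\calV}\leq\rho$ for small~$T^\ast$ by splitting
\begin{align*}
 \Phi(w)(t) - u_0
 = \big(\Phi(w)(t)-\calB^- g(t)\big) - \big(u_0-\calB^- g(0)\big) - \calB^-\big(g(0)-g(t)\big),
\end{align*}
controlling the first two (kernel-valued) differences through~\eqref{eqn:PDAE_f_independent} and the continuity at $t=0$ of $t\mapsto\|\Phi(w)(t)-\calB^- g(t)\|_{\calA_1}$, and the last term via $g\in H^1(0,T;\calQ^\ast)\hookrightarrow C(0,T;\calQ^\ast)$; here the global growth bound~\eqref{eqn:f_bound} renders all arising constants uniform over $w\in\overline B$. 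Banach's fixed-point theorem then yields a unique local solution.

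Finally, to pass from the local to a global solution and to prove uniqueness, I would insert the genuine solution into~\eqref{eqn:PDAE_f_independent} and use the \emph{global} linear growth~\eqref{eqn:f_bound} with $k\in L^2(0,T)$, the norm equivalence~\eqref{eqn:norm_equivalence}, and Gronwall's lemma to derive an a~priori bound on $\sup_t\|u(t)\|_{\calV}$ depending only on the data and not on the length of the existence interval. This uniform bound rules out finite-time blow-up and hence permits continuation of the local solution up to~$T$. Uniqueness follows from the same difference argument: two solutions have compact trajectories in~$\calV$ (being continuous images of $[0,T]$), so that~\eqref{eqn:f_lipschitz} provides a uniform Lipschitz constant along them, and a Gronwall estimate on successive subintervals forces them to coincide.
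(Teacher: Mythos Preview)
Your proposal is correct and follows essentially the same route as the paper: absorb $\calA_2$ into the nonlinearity so that Lemma~\ref{lem:PDAE_f_independent} applies with the self-adjoint part~$\calA_1$, set up a fixed-point map on $C(0,T^\ast;\calV)$, prove contraction on a small ball via the energy estimate~\eqref{eqn:PDAE_f_independent} together with the local Lipschitz condition, and then continue globally using~\eqref{eqn:f_bound} and Gronwall to exclude blow-up.

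The one difference worth noting is the center of the contraction ball. You work in a ball around the constant function~$u_0$, whereas the paper centers the ball at~$\tilde u$, the solution of~\eqref{eqn:PDAE} with $f\equiv 0$. The paper's choice makes the self-mapping step---which you rightly flag as the main obstacle---entirely transparent: since $S_{t_1}y - \tilde u$ satisfies the linear PDAE with \emph{zero} initial value and homogeneous constraint, estimate~\eqref{eqn:PDAE_f_independent} gives directly
\[
  \mu\,\|S_{t_1}y(t)-\tilde u(t)\|_\calV^2 \le \int_0^t \|f(s,y(s))\|_\calH^2 \ds,
\]
and the right-hand side is uniformly small over the ball by~\eqref{eqn:f_bound}. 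Your splitting around $\calB^-g$ leads to the same conclusion, but the ``continuity at $t=0$'' you invoke for the kernel part must be uniform in~$w$; what actually delivers this uniformity is precisely that the $w$-dependent contribution has zero initial value (equivalently: one is comparing to the $w$-independent function~$\tilde u$). Once this is made explicit your argument and the paper's coincide.
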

\begin{proof}
Without loss of generality, we assume that $\calA = \calA_1$. For this, we redefine $f(t,v) \leftarrow f(t,v)-\calA_2 v$, leading to an update of the involved constants $L\leftarrow L+c_{\calA_2}$ and $k \leftarrow k + c_{\calA_2}$ but leaving the radius $r$ unchanged. 


To prove the statement we follow the steps of~\cite[Ch.~6.3]{Paz83}. Let $t^\prime\in (0,T]$ be arbitrary but fixed. With~\eqref{eqn:f_bound} we notice that the Nemyskii map induced by~$f$ maps $C(0,t^\prime;\calV)$ into $L^2(0,t^\prime;\calH)$, cf.~\cite[Th.~1]{GolKT92}. Therefore, the solution map $S_{t^\prime} \colon C(0,t^\prime;\calV) \to C(0,t^\prime;\calV)$, which maps $y \in C(0,t^\prime;\calV)$ to the solution of
\begin{subequations}
\label{eqn:existence_uniqueness_PDAE_help}
\begin{alignat}{5}
	\dot{u}(t)&\ +\ &\calA u(t)&\ +\ &\calB^\ast \lambda(t)\, &= f(t, y(t)) &&\qquad\text{in } \V^*, \label{eqn:existence_uniqueness_PDAE_help_a}\\
	& &\calB u(t)& & &= g(t) &&\qquad\text{in } \Q^* \label{eqn:existence_uniqueness_PDAE_help_b}
\end{alignat}
\end{subequations}
with initial value $u_0$, is well-defined, cf.~Lemma~\ref{lem:PDAE_f_independent}. To find a solution to~\eqref{eqn:PDAE} we have to look for a unique fixed point of $S_{t^\prime}$ and show that $t^\prime$ can be extended to $T$. 

Let $\tilde{u}\in C(0,T;\calV)$ be the solution of the PDAE~\eqref{eqn:PDAE} for $f\equiv 0$ and initial value~$u_0$. With $r=r(u_0)$ and $L = L(u_0)$ we now choose $t_1 \in (0,T]$ such that \noindent
\begin{align}
  \|\tilde{u}(t)-u_0\|_{\calV} 
  &\ \leq\ \frac r2, \label{item:tildeu}\tag{a}\\
  \int_0^{t} |k|^2\ds 
  &\ \leq\ \frac{\mu r^2}{4\, (1+r + \|u_0\|_{\calV})^2},\label{item:bound}\tag{b}\\
  L^2t_1 
  &\ <\ \mu,\label{item:lipschitz} \tag{c}\\
  \int_0^{t} \tfrac{3}{\mu}\, |k|^2(1 + \|\tilde{u}\|_{\calV}^2)\ds  
  &\ \leq\ \frac{r^2}{4} \cdot \exp\big(-\tfrac{3}{\mu}\int_0^{t} |k|^2\ds \big) \label{item:gronwall}\tag{d}
\end{align}
for all $t\in [0,t_1]$. This is well-defined, since $\tilde{u}-u_0$ and the integrals in~\eqref{item:bound} and~\eqref{item:gronwall} are continuous functions in $t$, which vanish for $t=0$. We define 
\begin{equation*}
D := \big\{ y \in C(0,t_1;\calV)\,|\, \|y-\tilde{u}\|_{C([0,t_1],\calV)} \leq r/2\big\}
\end{equation*}
and consider $y_1, y_2 \in D$. By~\eqref{item:tildeu} we have $\|y_i - u_0\|_{C([0,t_1],\calV)} \leq r$. Using that $\tilde u$ and $S_{t_1} y_i$ satisfy the constraint~\eqref{eqn:existence_uniqueness_PDAE_help_b}, we obtain the estimate 
\begin{align*}
\mu\, \| (S_{t_1}y_i - \widetilde{u})(t)\|^2_{\calV} &\overset{\eqref{eqn:PDAE_f_independent}}{\leq} \!\int_0^t \|f(s,y_i(s))\|^2_{\calH} \ds\!\\ &\overset{\eqref{eqn:f_bound}}{\leq}\! \int_0^t |k(s)|^2\big(1+\|y_i(s)-u_0\|_{\calV} + \|u_0\|_{\calV}\big)^2\ds\\
&\overset{\hphantom{\eqref{eqn:f_bound}}}{\leq}\! \big(1+r + \|u_0\|_{\calV}\big)^2 \int_0^t |k(s)|^2\ds
\end{align*}
which implies with~\eqref{item:bound} that $S_{t_1}$ maps $D$ into itself. Further, we have 
\begin{align*}
\mu\, \| (S_{t_1}y_1 - S_{t_1}y_2)(t)\|^2_{\calV} \overset{\eqref{eqn:PDAE_f_independent}}{\leq} \int_0^t \|f(s,y_1(s))-f(s,y_2(s))\|^2_{\calH} \ds  
\overset{\eqref{eqn:f_lipschitz}}{\leq} L^2 t_1 \| y_1 - y_2\|^2_{C(0,t_1;\calV)}
\end{align*}
for all $t\leq t_1$, $i=1,2$. Together with the previous estimate and~\eqref{item:lipschitz}, this shows that $S_{t_1}$ is a contraction on~$D$. Hence, there exists a unique fixed point $u \in D \subset C([0,t_1],\calV)$ of~$S_{t_1}$ by the Banach fixed point theorem~\cite[Th.~1.A]{Zei92}. On the other hand, for every fixed point~$u^\star=S_{t_1} u^\star$ in~$C([0,t_1],\calV)$, we have the estimate
\begin{equation*}
\mu\, \| (u^\star - \widetilde{u})(t)\|^2_{\calV}
= \mu\, \| (S_{t_1} u^\star - \widetilde{u})(t)\|^2_{\calV} 
\le \int_0^t |k(s)|^2\big(1+\|(u^\star-\tilde{u})(s)\|_{\calV} + \|\tilde{u}(s)\|_{\calV}\big)^2\ds.
\end{equation*}
Using $(a+b+c)^2 \leq 3\, (a^2 + b^2 + c^2)$ and Gronwall's inequality it follows that 
\begin{equation}
\label{eqn:estimate_u_minus_tildeu}
\| (u^\star - \widetilde{u})(t)\|^2_{\calV} 
\leq \int_0^t \tfrac{3}{\mu}\, |k(s)|^2 \big(1+\|\tilde{u}(s)\|^2_{\calV}\big) \ds \cdot \exp\Big(\tfrac{3}{\mu} \int_0^t |k(s)|^2\ds\Big)
\end{equation}
for every $t \leq t_1$. Because of~\eqref{item:gronwall}, this shows that $u^\star$ is an element of~$D$ and thus, $u^\star=u$.

By considering problem~\eqref{eqn:PDAE} iterativly from $[t_{i-1},T]$, $t_0:=0$, to $[t_{i},T]$ with consistent initial value~$u_0=u(t_{i})$, we can extend~$u$ uniquely on an interval $\I$ with $u \in C(\I;\calV)$ and $u = S_{t^\prime} u$ for every $t^\prime \in \I$. Note that either $\I=[0,T]$ or $\I=[0,T^\prime)$ with $T^\prime \leq T$. The second case is only possible if $\|u(t)\|_{\calV} \to \infty$ for~$t \to T^\prime$, otherwise we can extend~$u$ by starting at $T^\prime$. But, since the estimate~\eqref{eqn:estimate_u_minus_tildeu} also holds for $u=u^\star$ and $t<T^\prime$, we have in limit that $\|u(T^\prime)\|_{\calV} \leq \|u(T^\prime) - \tilde{u}(T^\prime)\|_{\calV}+\|\tilde{u}(T^\prime)\|_{\calV}$ is bounded. Therefore, $u=S_Tu\in C([0,T];\calV)$. Finally, the stated spaces for $u$ and $\lambda$ follow by Lemma~\ref{lem:PDAE_f_independent} with right-hand side $f=f(\,\cdot\,,u(\,\cdot\,)).$
\end{proof}
\begin{remark}
\label{rem:uniformLipschitz}
Under the given assumptions on~$f$ from Theorem~\ref{thm:existence_uniqueness_PDAE}, one finds a radius~$r_u>0$ and a Lipschitz constant $L_u\in [0,\infty)$, both based on the solution~$u$, such that~\eqref{eqn:f_lipschitz} holds for all $x,y \in B_{r_u}(u(s))$ with $L=L_u$ and arbitrary $s\in [0,T].$ With these uniform constants one can show that the mapping of the data~$u_0 \in \calV$ and $g\in H^1(0,T;\calQ)$ with $\calB u_0 =g(0)$ to the solution $(u,\lambda)$ is continuous. 
\end{remark}

\begin{remark}
It is possible to weaken the assumption~\eqref{eqn:f_bound} of Theorem~\ref{thm:existence_uniqueness_PDAE} to $\|f(t,v)\|_{\calH} \leq k(t)(1+\|v\|^p_{\calV})$ for an arbitrary~$p >1$. Under this assumption one can show the existence of a unique solution of~\eqref{eqn:PDAE}, which may only exists locally.
\end{remark}

\begin{remark}
The assumptions considered in~\cite[Ch.~6.3]{Paz83} are stronger then the one in Theorem~\ref{thm:existence_uniqueness_PDAE}. If these additional assumptions are satisfied, then the existence and uniqueness of a solution to~\eqref{eqn:PDAE} follows directly by Lemma~\ref{lem:PDAE_f_independent}, \cite[Ch.~6, Th.~3.1 \& 3.3]{Paz83}, and the fact that every self-adjoint, elliptic operator~$\calA \in \calL(\calV,\calV^\ast)$ has a unique invertible square root $\calA^{\sfrac{1}{2}} \in \calL(\calV,\calH)$ with $\langle \calA v_1, v_2 \rangle = (\calA^{\sfrac{1}{2}}v_1, \calA^{\sfrac{1}{2}} v_2)$ for all $v_1,v_2 \in \calV$. This can be proven by interpreting~$\calA$ as an (unbounded) operator $\mathbf A\colon D(\mathbf A) \subset \calH \to \calH$ with domain $D(\mathbf A) := \calA^{-1}\calH \subset \calV \hookrightarrow \calH$ and the results of \cite[Ch.~6, Th.~4 \& Ch.~10, Th.~1]{BirS87} and~\cite[Th.~6.8]{Paz83}. 
%
\end{remark}
%
\subsection{A solution formula for the linear case}\label{sec:prelim:slnformula}
In the linear case, the solution of~\eqref{eqn:PDAE} can be expressed by the variation-of-constants formula (Duhamel's principle), cf.~\cite{EmmM13}. In the semi-linear case, we consider the term~$f(t,u)$ as a right-hand side which leads to an implicit formula only. This, however, is still of value for the numerical analysis of time integration schemes. 

The solution formula is based on the decomposition~$u=\uo+\uc$ with~$\uo\colon[0,T] \to \Vo$ and~$\uc\colon[0,T] \to \Vc$. The latter is fully determined by the constraint~\eqref{eqn:PDAE:b}, namely~$\uc(t) = \calB^-g(t)\in\Vc$. For~$\uo$ we consider the restriction of~\eqref{eqn:PDAE:a} to the test space~$\Vo$. Since the Lagrange multiplier disappears in this case, we obtain 
\begin{align}
\label{eqn:uker}
  \uodot + \calAo \uo
  = \uodot + \calA \uo 
  = f(t, \uo+\uc) - \ucdot\qquad\text{in } \Vo^*.
\end{align}
Note that the right-hand side is well-defined as functional in~$\Vo^*$ using the trivial restriction of~$\V^*$ to~$\Vo^*$. Further, the term~$\calA \uc$ disappears under test functions in~$\Vo$ due to the definition of~$\Vc$. If this orthogonality is not respected within the implementation, then this term needs to be reconsidered. 

The solution to~\eqref{eqn:uker} can be obtained by an application of the variation-of-constants formula. Since the semigroup can only be applied to functions in $\cHo$, we introduce the operator  
\[
\iota_0\colon \cH \equiv \cH^* \to \cHo^* \equiv \cHo.
\]
This operator is again based on a simple restriction of test functions and leads to the solution formula  
\begin{align*}
  u(t) 
  &= \uc(t) + \uo(t) \\
  &= \calB^-g(t) + e^{-t\calAo}\uo(0) + \int_0^t e^{-(t-s)\calAo} \iota_0 \big[ f(s, \uo(s)+\uc(s)) - \ucdot(s) \big] \ds. 
\end{align*}
Assuming a partition of the time interval~$[0,T]$ by~$0=t_0 < t_1 < \dots < t_N = T$, we can write the solution formula in the form 
\begin{align}
  u(t_{n+1}) - &\calB^-g_{n+1} \notag \\
  &= e^{-(t_{n+1}-t_n)\calAo} \big[u(t_n)-\calB^-g_n\big] + \int_{t_n}^{t_{n+1}} e^{-(t_{n+1}-s)\calAo} \iota_0 \big[ f(s, u(s)) - \ucdot(s) \big] \ds. \label{eqn:uexact}
\end{align}
Note that we use here the abbreviation~$g_n := g(t_n)$. In the following two sections we construct exponential integrators for constrained semi-linear systems of the form~\eqref{eqn:PDAE}. Starting point is a first-order scheme based on the exponential Euler method applied to equation~\eqref{eqn:uker}.
%

\section{The Exponential Euler Scheme}\label{sec:Euler}
The idea of exponential integrators is to approximate the integral term in~\eqref{eqn:uexact} by an appropriate quadrature rule. Following the construction for PDEs~\cite{HocO10}, we consider in this section the function evaluation at the beginning of the interval. This then leads to the scheme 
\begin{align}
  u_{n+1} - \calB^-g_{n+1} 
  &= e^{-\tau \calAo}\big[ u_n - \calB^-g_{n}\big] + \int_0^\tau e^{-(\tau-s)\calAo} \iota_0 \big[ f(t_n, u_n) - \ucdot(t_n) \big] \ds \notag\\
  &= \varphi_0(-\tau \calAo)\big(u_n - \calB^-g_{n}\big) + \tau\varphi_1(-\tau\calAo)  \big( \iota_0 \big[ f(t_n, u_n) - \calB^- \dot g_n \big] \big). \label{eqn:expEuler}
\end{align}
As usual, $u_n$ denotes the approximation of $u(t_n)$. Further, we restrict ourselves to a uniform partition of~$[0,T]$ with step size~$\tau$ for simplicity. Assuming that the resulting approximation satisfies the constraint in every step, we have~$u_n - \calB^-g_{n} \in \Vo \hookrightarrow \cHo$ such that the semigroup~$e^{-\tau \calAo}$ is applicable. The derived formula~\eqref{eqn:expEuler} is beneficial for the numerical analysis but lacks the practical access which we tackle in the following. 
%
\subsection{The practical method}
Since the evaluation of the~$\varphi$-functions with the operator~$\calAo$ is not straightforward, we reformulate the method by a number of saddle point problems. Furthermore, we need evaluations of~$\calB^-$ applied to the right-hand side~$g$ (or its time derivative). Also this is replaced by the solution of a saddle point problem. 

Consider $x := \calB^-g_n = \calB^-g(t_{n}) \in \Vc \subseteq \V$. Then, $x$ can be written as the solution of the stationary auxiliary problem 
\begin{subequations}
\label{eqn:Binvers}	
\begin{alignat}{4}
  &\calA x&\ +\ &\calB^* \nu\ &=&\ 0  &&\qquad\text{in } \V^* , \label{eqn:Binvers:a} \\
  &\calB x& & &=&\ g_n &&\qquad\text{in } \Q^*. \label{eqn:Binvers:b}
\end{alignat}
\end{subequations}
Note that equation~\eqref{eqn:Binvers:b} enforces the connection of~$x$ to the right-hand side~$g$ whereas the first equation of the system guarantees the desired~$\calA$-orthogonality. The Lagrange multiplier~$\nu$ is not of particular interest and simply serves as a dummy variable. The unique solvability of system~\eqref{eqn:Binvers} is discussed in the following lemma. 
\begin{lemma}
\label{lem:stationary}
Let the operators $\calA$ and $\calB$ satisfy Assumptions~\ref{assB} and ~\ref{assA}. Then, for every $g_n\in \Q^*$ there exists a unique solution~$(x,\nu) \in \Vc\times\Q$ to system~\eqref{eqn:Binvers}.  
\end{lemma}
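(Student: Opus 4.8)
The plan is to read system~\eqref{eqn:Binvers} as a linear saddle point problem for the pair~$(x,\nu)$ and to invoke the classical well-posedness theory for such problems (Brezzi's theorem). The two structural ingredients this theory demands are precisely the standing hypotheses: $\calB$ satisfies the inf-sup condition by Assumption~\ref{assB}, and the bilinear form~$\langle \calA\,\cdot\,,\cdot\,\rangle$ is elliptic, hence coercive, on the kernel~$\Vo = \ker\calB$ by Assumption~\ref{assA} together with~\eqref{eqn:norm_equivalence}. I would stress that ellipticity is available only on~$\Vo$ and not on all of~$\V$, so one cannot argue by Lax--Milgram on the full space; it is exactly the saddle point structure that lets coercivity on the kernel suffice.

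Rather than quoting the abstract theorem verbatim, I would make the construction explicit using the decomposition~$\V = \Vo\oplus\Vc$ that is already available. First I would produce~$x$: since~$\ker\calB = \Vo$, the restriction~$\calB|_{\Vc}\colon \Vc\to\Q^*$ is injective, and it is onto because Assumption~\ref{assB} makes~$\calB$ surjective; by the open mapping theorem it is a topological isomorphism, which yields a unique~$x\in\Vc$ solving~\eqref{eqn:Binvers:b} and, at the same time, delivers the membership~$x\in\Vc$ claimed in the statement. It then remains to find a unique~$\nu\in\Q$ with~$\calB^*\nu = -\calA x$ in~$\V^*$, i.e.\ equation~\eqref{eqn:Binvers:a}. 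For solvability I would use the closed range theorem: the inf-sup condition guarantees that~$\image\calB^*$ is closed and equals the annihilator~$\Vo^0 = (\ker\calB)^0$. Because~$x\in\Vc$, the definition~$\Vc = \{v\in\V \mid \calA v \in \Vo^0\}$ gives~$\calA x\in\Vo^0 = \image\calB^*$, so~$-\calA x$ lies in the range of~$\calB^*$ and a multiplier exists. Uniqueness of~$\nu$ follows from injectivity of~$\calB^*$, which is once more equivalent to the surjectivity of~$\calB$ furnished by the inf-sup condition, and the inf-sup constant~$\beta$ also yields the stability bound~$\|\nu\|_\Q \lesssim \|\calA x\|_{\V^*}$.

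I do not expect a serious obstacle, since the argument is routine once the Brezzi conditions are identified; the only points requiring care are the two hinges of the existence proof. The first is verifying that~$x$ genuinely lands in~$\Vc$ and not merely in~$\V$: in the construction above this is automatic because~$x$ is built inside~$\Vc$ from the outset, while in the purely abstract route it would follow by testing~\eqref{eqn:Binvers:a} against~$w\in\Vo$ and using~$\calB w = 0$ to force~$\langle\calA x, w\rangle = 0$. The second is matching the range of~$\calB^*$ with the definition of~$\Vc$, which is the real content of the existence of~$\nu$. Finally, I would note that the possible non-symmetry of~$\calA = \calA_1 + \calA_2$ causes no difficulty, as coercivity of~$\langle\calA\,\cdot\,,\cdot\,\rangle$ on~$\Vo$ implies both kernel inf-sup conditions needed for Brezzi's theorem irrespective of symmetry.
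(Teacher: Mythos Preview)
Your proposal is correct and rests on the same Brezzi saddle point ingredients as the paper. The paper's own proof is shorter: it cites \cite[Ch.~II, Prop.~1.3]{BreF91} directly for existence and uniqueness of~$(x,\nu)\in\V\times\Q$, and then verifies~$x\in\Vc$ a posteriori by testing~\eqref{eqn:Binvers:a} against~$w\in\Vo$ (exactly the alternative you mention at the end). Your route is the more constructive unpacking of that citation: you build~$x\in\Vc$ first via the isomorphism~$\calB|_{\Vc}\colon\Vc\to\Q^*$, and then recover~$\nu$ from the closed range identity~$\image\calB^* = \Vo^0$ together with the defining property~$\calA x\in\Vo^0$ of~$\Vc$. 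The gain is that membership~$x\in\Vc$ is immediate rather than checked after the fact, and the argument is self-contained; the cost is that you implicitly use that~$\Vc$ is closed (it is, as the preimage of the closed annihilator~$\Vo^0$ under the continuous map~$\calA$) and that the decomposition~$\V=\Vo\oplus\Vc$ is genuinely a direct sum, both of which the paper takes for granted from Section~\ref{sec:prelim:PDAE}.
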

\begin{proof}
Under the given assumptions on the operators $\calA$ and $\calB$ there exists a unique solution~$(x,\nu) \in \V\times\Q$ to~\eqref{eqn:Binvers}, even in the case with an inhomogeneity in the first equation, see~\cite[Ch.~II, Prop.~1.3]{BreF91}. It remains to show that $x$ is en element of $\Vc$. For this, note that $x$ satisfies $\langle \calA x, w\rangle =0$ for all $w\in\Vo$, since the $\calB^*$-term vanishes for these test functions. This, however, is exactly the definition of the complement space~$\Vc$.
\end{proof}	
%
Being able to compute~$\calB^-g_n$, we are now interested in the solution of problems involving the operator~$\calAo$. This will be helpful for the reformulation of the exponential Euler method~\eqref{eqn:expEuler}. We introduce the auxiliary variable~$w_n\in \Vo$ as the solution of 
\[
  \calAo w_n 
  = f(t_n, u_n) - \ucdot(t_n) 
  = f(t_n, u_n) - \calB^- \dot g_n 
  \qquad\text{in } \Vo^*.
\]
This is again equivalent to a stationary saddle point problem, namely 
\begin{subequations}
\label{eqn:wn}	
\begin{alignat}{4}
&\calA w_n&\ +\ &\calB^\ast \nu_n\ &=&\ f(t_n, u_n) - \calB^- \dot g_n  &&\qquad\text{in } \V^* ,\\
&\calB w_n& & &=&\ 0 &&\qquad\text{in } \Q^*.
\end{alignat}
\end{subequations}
As above, the Lagrange multiplier is only introduced for a proper formulation and not of particular interest in the following. The unique solvability of system~\eqref{eqn:wn} follows again by Lemma~\ref{lem:stationary}, since the right-hand side of the first equation is an element of~$\V^*$. In order to rewrite~\eqref{eqn:expEuler}, we further note that the recursion formula for~$\varphi_1$ implies  
\[
  \tau\varphi_1(-\tau\calAo) h 
  = - \big[ \varphi_0(-\tau\calAo) - \id \big] \calAo^{-1} h
\] 
for all $h\in\cHo$. Recall that $\calAo$ is indeed invertible due to Assumption~\ref{assA}. Thus, the exponential Euler scheme can be rewritten as 
\begin{align*}
  u_{n+1} 
  = \calB^-g_{n+1} + \varphi_0(-\tau \calAo)\big(u_n - \calB^-g_{n} - w_n\big) + w_n.
\end{align*}
Finally, we need a way to compute the action of~$\varphi_0(-\tau \calAo)$. For this, we consider the corresponding PDAE formulation. The resulting method then reads~$u_{n+1} = \calB^-g_{n+1} + z(t_{n+1}) + w_n$, where $z$ is the solution of the {\em linear} homogeneous PDAE 
\begin{subequations}
\label{eqn:zn}	
\begin{alignat}{5}
  \dot{z}(t)&\ +\ &\calA z(t)&\ +\ &\calB^\ast \mu(t)\, &= 0 &&\qquad\text{in } \V^*,\\
  & &\calB z(t)& & &= 0 &&\qquad\text{in } \Q^*
\end{alignat}
\end{subequations}
with initial condition~$z(t_n) = u_n-\calB^-g_{n}-w_n$. Thus, the exponential Euler scheme given in~\eqref{eqn:expEuler} can be computed by a number of saddle point problems.  We summarize the necessary steps in Algorithm~\ref{alg:expEuler}.
\begin{algorithm}
\setstretch{1.1}
\caption{Exponential Euler scheme}
\label{alg:expEuler}
\begin{algorithmic}[1]
	\State {\bf Input}: step size~$\tau$, consistent initial data~$u_0\in\V$, right-hand sides $f$, $g$
	\vspace{0.5em}
	\For{$n=0$ {\bf to} $N-1$} 
	\State compute $\calB^-g_n$, $\calB^-g_{n+1}$, and $\calB^-\dot g_n = \calB^-\dot g(t_n)$ by~\eqref{eqn:Binvers} \label{alg:expEuler_Bminus}
	\State compute $w_n$ by~\eqref{eqn:wn} \label{alg:expEuler_w}
	\State compute $z$ as solution of~\eqref{eqn:zn} on~$[t_n,t_{n+1}]$ with initial data~$u_n-\calB^-g_{n}-w_n$ \label{alg:expEuler_z}
	\State set $u_{n+1} = \calB^-g_{n+1} + z(t_{n+1}) + w_n$
	\EndFor 	
\end{algorithmic}
\end{algorithm}
\begin{remark}
One step of the exponential Euler scheme consists of the solution of four (from the second step on only three) stationary and a single transient saddle point problem, including only one evaluation of the nonlinear function~$f$. We emphasize that all these systems are linear such that no Newton iteration is necessary in the solution process. Furthermore, the time-dependent system is homogeneous such that it can be solved without the need of a regularization. 
\end{remark}
%
\subsection{Convergence analysis}
In this section we analyze the convergence order of the exponential Euler method for constrained PDAEs of parabolic type. For the unconstrained case it is well-known that  the convergence order is one. Since our approach is based on the unconstrained PDE~\eqref{eqn:uker} of the dynamical part in $\calVker$, we expect the same order for the solution of Algorithm~\ref{alg:expEuler}. For the associated proof we will assume that the approximation~$u_n$ lies within a strip of radius~$r$ around~$u$, where~$f$ is locally Lipschitz continuous with constant~$L>0$. Note that by Remark~\ref{rem:uniformLipschitz} there exists such a uniform radius and local Lipschitz constant. Furthermore, a sufficiently small step size~$\tau$ guarantees that~$u_n$ stays within this strip around~$u$, since the solution~$z$ of~\eqref{eqn:zn} and $\calB^- g$ are continuous.
\begin{theorem}[Exponential Euler]
\label{thm:Euler}
Consider the assumptions of Theorem~\ref{thm:existence_uniqueness_PDAE} including Assumptions~\ref{assB} and \ref{assA}. Further, let the step size~$\tau$ be sufficiently small such that the derived approximation~$u_n$ lies within a strip along~$u$ in which $f$ is locally Lipschitz continuous with a uniform constant~$L>0$. For the right-hand side of the constraint we assume~$g\in H^{2}(0,T;\Q^*)$. If the exact solution of~\eqref{eqn:PDAE} satisfies~$\ddt f(\cdot,u(\cdot)) \in L^{2}(0,T;\cH)$, then the approximation~$u_n$ obtained by the exponential Euler scheme of Algorithm~\ref{alg:expEuler} satisfies 
\begin{equation*}
  \Vert u_n - u(t_n) \Vert_{\calV}^2
  \ \lesssim\ \tau^2 \int_0^{t_n} \|\ddts f(t, u(t))\|^2_{\calH} + \| \calB^- \ddot g(t)\|^2_{\calH} \dt.
\end{equation*}
Note that the involved constant only depends on $t_n$, $L$, and the operator~$\calA$.
\end{theorem}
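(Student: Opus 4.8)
The plan is to sidestep parabolic smoothing estimates entirely and instead reinterpret Algorithm~\ref{alg:expEuler} as the \emph{exact} solution of the kernel equation~\eqref{eqn:uker} driven by a frozen, piecewise-constant-in-time right-hand side. Concretely, I would introduce the continuous interpolant $\hat U\colon [0,T]\to\Vo$ that on each subinterval $[t_n,t_{n+1}]$ is given by
\[
  \hat U(t)
  = e^{-(t-t_n)\calAo}\big(u_n-\calB^-g_n\big)
  + (t-t_n)\,\varphi_1\big(-(t-t_n)\calAo\big)\,\iota_0\big[f(t_n,u_n)-\calB^-\dot g_n\big].
\]
By the variation-of-constants formula this $\hat U$ solves $\dot{\hat U}+\calAo\hat U=\iota_0[f(t_n,u_n)-\calB^-\dot g_n]$ on $(t_n,t_{n+1})$, and comparison with~\eqref{eqn:expEuler} shows that its nodal values reproduce the scheme, i.e.\ $\hat U(t_n)=u_n-\calB^-g_n\in\Vo$ for all $n$. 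Since the exact kernel part $\uo=u-\calB^-g$ solves~\eqref{eqn:uker} with the genuine right-hand side $\iota_0[f(s,u(s))-\calB^-\dot g(s)]$, the error $E:=\hat U-\uo$ satisfies the linear kernel equation $\dot E+\calAo E=\iota_0 G_0$ with $E(0)=0$ (by consistency of $u_0$), where on $[t_j,t_{j+1})$ the defect is $G_0(s)=[f(t_j,u_j)-f(s,u(s))]+\calB^-[\dot g(s)-\dot g_j]\in\cH$. Crucially, $E(t_n)=u_n-u(t_n)=:e_n$, so the quantity I want is the nodal value of the solution of a constrained parabolic problem with homogeneous constraint and right-hand side $G_0$.

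Next I would apply the energy estimate~\eqref{eqn:PDAE_f_independent} of Lemma~\ref{lem:PDAE_f_independent}. As that lemma needs a self-adjoint operator while the scheme uses the full $\calAo=\calA_1|_{\Vo}+\calA_2|_{\Vo}$, I move the non-self-adjoint part to the right-hand side and regard $E$ as the (kernel component of the) PDAE solution associated with the self-adjoint $\calA_1$, homogeneous constraint, and right-hand side $G_0-\calA_2 E\in L^2(0,T;\cH)$; that this solution is indeed $E$ follows by restricting to test functions in $\Vo$, where $\calB^\ast$ drops out, and uniqueness. Using $\calA_2\in\calL(\V,\cH)$, the norm equivalence~\eqref{eqn:norm_equivalence}, and a continuous Gronwall argument to absorb the resulting $\tfrac{2c_{\calA_2}^2}{\mu}\int_0^{t_n}\|E\|_{\calA_1}^2$ term, I obtain $\|e_n\|_{\calV}^2\lesssim\int_0^{t_n}\|G_0(s)\|_{\cH}^2\ds$ with a constant depending only on $t_n$ and $\calA$.

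It then remains to estimate the defect. Splitting $f(t_j,u_j)-f(s,u(s))=[f(t_j,u_j)-f(t_j,u(t_j))]+[f(t_j,u(t_j))-f(s,u(s))]$, the first bracket is bounded by local Lipschitz continuity as $L\|e_j\|_\calV$, the second equals $-\int_{t_j}^s\ddts f(\sigma,u(\sigma))\,\mathrm d\sigma$, and $\calB^-[\dot g(s)-\dot g_j]=\int_{t_j}^s\calB^-\ddot g(\sigma)\,\mathrm d\sigma$. Applying $(a+b+c)^2\le 3(a^2+b^2+c^2)$, Cauchy--Schwarz on each subinterval (which contributes the factor $\tau$), integrating over $[t_j,t_{j+1}]$ and summing yields
\[
  \int_0^{t_n}\|G_0\|_{\cH}^2\ds
  \ \lesssim\ L^2\tau\sum_{j=0}^{n-1}\|e_j\|_{\calV}^2
  + \tau^2\int_0^{t_n}\big(\|\ddts f(t,u(t))\|_{\cH}^2 + \|\calB^-\ddot g(t)\|_{\cH}^2\big)\dt,
\]
where $g\in H^2(0,T;\Q^\ast)$ and $\ddts f(\cdot,u(\cdot))\in L^2(0,T;\cH)$ guarantee $G_0\in L^2(0,T;\cH)$. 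Inserting this into the energy bound and absorbing the first sum by a discrete Gronwall inequality (admissible since it ranges only over $j\le n-1$, and the consistency term is nondecreasing in $n$) produces exactly the claimed estimate.

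The step I expect to be decisive is the reinterpretation in the first paragraph: identifying the scheme with an exact frozen-right-hand-side solve is what lets the $\calV$-norm ($=\calA_1$-norm on $\Vo$) estimate~\eqref{eqn:PDAE_f_independent} do all the regularity work and delivers the \emph{optimal} order without the logarithmic loss a naive convolution with the smoothing kernel $\|\calAo^{1/2}e^{-t\calAo}\|\lesssim t^{-1/2}$ would incur. The only genuine technical care is the treatment of the non-self-adjoint $\calA_2$ within the energy estimate and the verification that the interpolant $\hat U$ is continuous across nodes with the regularity required to invoke Lemma~\ref{lem:PDAE_f_independent}.
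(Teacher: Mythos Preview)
Your proposal is correct and follows essentially the same route as the paper: your interpolant $\hat U$ is precisely the kernel part of the paper's function $U(t)=z(t)+w_n+\calB^-g(t)$ (so $E=-\Du$), and both proofs move $\calA_2$ to the right-hand side, invoke the energy estimate~\eqref{eqn:PDAE_f_independent}, split the defect via Lipschitz continuity plus the integral representation of $f(t_j,u(t_j))-f(s,u(s))$ and $\calB^-(\dot g(s)-\dot g_j)$, and close with a Gronwall argument. The only cosmetic difference is that the paper applies~\eqref{eqn:PDAE_f_independent} subinterval-by-subinterval and then iterates the resulting recursion, whereas you apply it once globally and finish with a discrete Gronwall; these are equivalent organisations of the same estimate.
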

\begin{proof}
With $w_n$ and $z$ from~\eqref{eqn:wn} and~\eqref{eqn:zn}, respectively, we define $U(t):=z(t)+w_n+\calB^-g(t)$ for $t\in [t_n,t_{n+1}]$, $n=0,\ldots,N-1$. This function satisfies 
\begin{equation*}
U(t_n)= z(t_n) + w_n + \calB^-g_n = u_n \quad \text{and} \quad U(t_{n+1})=z(t_{n+1}) + w_n + \calB^-g_{n+1} = u_{n+1}.
\end{equation*}
Furthermore, since $\dot U(t) = \dot z(t) + \calB^- \dot g(t)$, 
the function $U$ solves the PDAE 
\begin{alignat*}{5}
  \dot{U}(t)&\ +\ &\calA U(t)&\ +\ &\calB^\ast \Lambda(t)\, &= f(t_n,u_n) + \calB^-(\dot g(t) - \dot g_n) &&\qquad\text{in } \V^*,\\
  & &\calB U(t)& & &= g(t) &&\qquad\text{in } \Q^*
\end{alignat*}
on $[t_n,t_{n+1}]$, $n=0,\ldots,N-1$ with initial value $U(t_0) = u_0$. 
To shorten notation we define $\Du := u - U$ and $\Dl := \lambda - \Lambda$, which satisfy  
\begin{alignat*}{5}
  \ddts \Du &\ +\ &\calA_1 \Du&\ +\ &\calB^\ast \Dl\, &= f(\cdot,u(\cdot))- f(t_n,u_n) - \calA_2 \Du - \calB^-\big(\dot g - \dot g_n\big) &&\qquad\text{in } \V^*,\\
  & &\calB \Du & & &= 0  &&\qquad\text{in } \Q^*
\end{alignat*}
on each interval $[t_n,t_{n+1}]$ with initial value $\Du(t_0) = 0$ if $n=0$ and $\Du(t_n) = u(t_n) - u_n$ otherwise. In the following, we derive estimates of~$\Du$ on all sub-intervals. Starting with $n=0$, we have by Lemma~\ref{lem:PDAE_f_independent} that 
\begin{align*}
\| \Du(t)\|^2_{\calA_1}\!
&\overset{\eqref{eqn:PDAE_f_independent}}{\leq} \!\int_0^t \|f(s,u(s))- f(0,u_0) - \calA_2 \Du(s) - \calB^-\big(\dot g(s)- \dot g_0\big)\|^2_{\calH} \ds\\
&\overset{\hphantom{\eqref{eqn:PDAE_f_independent}}}{\leq} 2 \!\int_0^t \Big\|\int_0^s \ddts[\eta] f(\eta,u(\eta)) - \calB^-\ddot g(\eta) \deta\Big\|^2_{\calH}  + \tfrac{c^2_{\calA_2}}{\mu} \| \Du(s)\|_{\calA_1}^2 \ds.
\end{align*}
By Gronwall's lemma and $t=t_1=\tau$ we obtain with $c:=2c^2_{\calA_2}{\mu}^{-1}$ the bound
\begin{equation}
\label{eqn:estimate_Euler_0}
\begin{aligned}
\| u(t_1) - u_1\|^2_{\calA_1}
\leq\, &2\, e^{c\tau} \int_0^\tau \Big\|\int_0^s \ddts[\eta] f(\eta,u(\eta)) - \calB^-\ddot g(\eta) \deta\Big\|^2_{\calH} \ds\\
\leq\, &2\, e^{c\tau} \int_0^\tau s \int_0^s \| \ddts[\eta] f(\eta,u(\eta)) - \calB^-\ddot g(\eta) \|^2_{\calH} \deta \ds\\
\leq\, &2\, e^{c\tau}\tau^2  \underbrace{\int_0^\tau \| \ddss f(s,u(s))\|^2_{\calH} + \|\calB^-\ddot g(s)\|^2_{\calH} \ds }_{=:\,  \calI(\ddt f,\, \ddot g,\, 0,\, t_1)}.
\end{aligned}
\end{equation}
With the uniform Lipschitz constant $L$ we have for $n\ge 1$ that
\begin{align*}
\int_{t_n}^{t_{n+1}} \|f(s,u(s)&)-f(t_n,u_n)\|^2_{\calH} \ds\\
\leq\, &2 \int_{t_n}^{t_{n+1}} \|f(t_n,u(t_n))-f(t_n,u_n)\|^2_{\calH} + \|f(s,u(s))-f(t_n,u(t_n))\|^2_{\calH} \ds\\
\leq\, &2\, \tau \tfrac{L^2}{\mu}\, \| u(t_n) - u_n\|^2_{\calA_1} + 2\int_{t_n}^{t_{n+1}} (s-t_n)\int_{t_n}^s \|\ddts[\eta] f(\eta,u(\eta))\|^2_{\calH} \deta \ds. 
\end{align*}
With this, we obtain similarly as in~\eqref{eqn:estimate_Euler_0} and with Young's inequality, 
\begin{align}
\label{eqn:estimate_Euler_n}
\| u(t_{n+1}) - u_{n+1}\|^2_{\calA_1} 
\leq e^{c\tau} \Big[ (1+3\,\tau \tfrac{L^2}{\mu})\| u(t_{n}) - u_{n}\|^2_{\calA_1} 
+ 3\, \tau^2\, \calI(\ddts f,\ddot g,t_n,t_{n+1})\Big].
\end{align}
Therefore, with $(1+x) \leq e^x$, estimate~\eqref{eqn:estimate_Euler_0}, and an iterative  application of the estimate~\eqref{eqn:estimate_Euler_n} we get
\begin{align*}
\| u(t_{n+1}) - u_{n+1}\|^2_{\calA_1} 
&\leq \tau^2\ 3 \sum_{k=0}^n \exp( c \tau)^{n+1-k}  (1+3\, \tau \tfrac{L^2}{\mu})^{n-k}\, \calI(\ddts f,\ddot g,t_k,t_{k+1})\\
&\leq\tau^2\ 3\, \exp(c\, t_{n+1}) \exp\big( 3\tfrac{L^2}{\mu} t_{n}\big)\,  \calI(\ddts f,\ddot g,0,t_{n+1})
\end{align*}
for all~$n=0,\ldots,N-1$. The stated estimate finally follows by the equivalence of~$\|\cdot\|_{\calV}$ and~$\|\cdot\|_{\calA}$ on~$\calVker$, see~\eqref{eqn:norm_equivalence}.
\end{proof}
\begin{remark}
The assumption on the step size $\tau$ only depends on the nonlinearity~$f$ and not on the operator~$\calA$. Thus, this condition does not depend on the stiffness of the system and still allows large time steps.
\end{remark}
\begin{remark}
\label{rem:symA}
In the case of a self-adjoint operator~$\calA$, i.e., $\calA_2=0$, the convergence result can also be proven by the restriction to test functions in~$\Vo$ and the application of corresponding results for the unconstrained case, namely~\cite[Th.~2.14]{HocO10}. This requires similar assumptions but with~$\ddt f(\cdot,u(\cdot)) \in L^{\infty}(0,T;\cH)$.

We like to emphasize that this procedure is also applicable if~$\calA_2\neq 0$ by moving~$\calA_2$ into the nonlinearity~$f$. This, however, slightly changes the proposed scheme, since then only~$\calA_2 u_n$ enters the approximation instead of $\calA_2 u(t)$. In practical applications this would also require an additional effort in order to find the symmetric part of the differential operator~$\calA$ which is still elliptic on~$\Vo$.
\end{remark}
%
%
\subsection{An alternative approach}
A second approach to construct an exponential Euler scheme which is applicable to constrained systems is to formally apply the method to the corresponding singularly perturbed PDE. This approach was also considered in~\cite{HocLS98} for DAEs of index 1. In the present case, we add a small term~$\eps \dot\lambda$ into the second equation of~\eqref{eqn:PDAE}. Thus, we consider the system 
\begin{subequations}
\label{eqn:PDAE:eps}
	\begin{alignat}{5}
	\dot{u}(t)&\ +\ &\calA u(t)&\ +\ &\calB^\ast \lambda(t)\, &= f(t, u) &&\qquad\text{in } \V^* \label{eqn:PDAE:eps:a},\\
	\eps\dot{\lambda}(t)&\ +\ &\calB u(t)& & &= g(t) &&\qquad\text{in } \Q^*,
	\label{eqn:PDAE:eps:b}
	\end{alignat}
\end{subequations}
which can be written in operator matrix form as 
\[
  \begin{bmatrix} \dot u \\ \dot\lambda  \end{bmatrix}
  = \begin{bmatrix} \id &  \\ & \tfrac{1}{\eps}\id \end{bmatrix}  
  \Bigg\{
  -\begin{bmatrix} \calA & \calB^* \\ \calB & \end{bmatrix}  
  \begin{bmatrix} u \\ \lambda  \end{bmatrix} + 
  \begin{bmatrix} f(t,u) \\ g(t) \end{bmatrix}
  \Bigg\}.
\]
For this, an application of the exponential Euler method yields the scheme 
\[
  \begin{bmatrix} u_{n+1} \\ \lambda_{n+1} \end{bmatrix}
  = \varphi_0 \Big(-\tau \begin{bmatrix} \calA & \calB^* \\ \tfrac{1}{\eps}\calB & \end{bmatrix} \Big)
  \begin{bmatrix} u_n \\ \lambda_n \end{bmatrix} 
  + \tau \varphi_1\Big(-\tau \begin{bmatrix} \calA & \calB^* \\ \tfrac{1}{\eps}\calB & \end{bmatrix} \Big) 
  \begin{bmatrix} f(t_n,u_n) \\ \tfrac{1}{\eps} g_n \end{bmatrix}.
\]
We introduce the auxiliary variables~$(\bar w_n, \bar \nu_n) \in \V\times\Q$ as the unique solution to the stationary saddle point problem 
\begin{alignat*}{4}
  &\calA \bar w_n&\ +\ &\calB^\ast \bar\nu_n\ &=&\ f(t_n, u_n) &&\qquad\text{in } \V^* ,\\
  &\calB \bar w_n& & &=&\ \theta g_n + (1-\theta)g_{n+1}  &&\qquad\text{in } \Q^*.
\end{alignat*}
The included parameter~$\theta\in [0,1]$ controls the consistency as outlined below. Then, the exponential Euler method can be rewritten as 
\[
  \begin{bmatrix} u_{n+1} \\ \lambda_{n+1} \end{bmatrix}
%
%
%
  = \varphi_0 \Big(-\tau \begin{bmatrix} \calA & \calB^* \\ \tfrac{1}{\eps}\calB & \end{bmatrix} \Big)
  \begin{bmatrix} u_n -\bar w_n \\ \lambda_n - \bar\mu_n \end{bmatrix} + \begin{bmatrix}\bar w_n \\ \bar \mu_n \end{bmatrix},
\]
which allows an interpretation as the solution of a linear (homogeneous) PDE. Finally, we set~$\eps=0$, which leads to the following time integration scheme: Given~$\bar w_n$, solve on~$[t_n, t_{n+1}]$ the linear system  
\begin{alignat*}{5}
  \dot{z}(t)&\ +\ &\calA z(t)&\ +\ &\calB^\ast \mu(t)\, &= 0 &&\qquad\text{in } \V^*,\\
  & &\calB z(t)& & &= 0 &&\qquad\text{in } \Q^*
\end{alignat*}
with initial condition~$z(t_n) = u_n - \bar w_n$. The approximation of~$u(t_{n+1})$ is then defined through~$u_{n+1} := z(t_{n+1}) + \bar w_n$. 
 
We emphasize that the initial value of~$z$ may be inconsistent. In this case, the initial value needs to be projected to~$\cHo$, cf.~Section~\ref{sec:prelim:slnformula}. If the previous iterate satisfies~$\calB u_n = g_n$, then the choice $\theta=1$ yields~$\calB z(t_n) = 0$ and thus, consistency. This, however, does not imply~$\calB u_{n+1} = g_{n+1}$. On the other hand, $\theta=0$ causes an inconsistency for~$z$ but guarantees~$\calB u_{n+1} = g_{n+1}$. We now turn to an exponential integrator of higher order. 
\section{Exponential Integrators of Second Order}\label{sec:secondOrd}
This section is devoted to the construction of an exponential integrator of order two for constrained parabolic systems of the form~\eqref{eqn:PDAE}. In particular, we aim to transfer the method given in~\cite[Exp.~11.2.2]{StrWP12}, described by the {\em Butcher tableau}
\begin{equation}
\label{eqn:butcher_second_order}
\begin{array}{r|cc}
 	0 &  &  \\
 	1 & \varphi_1 & \\
 	\hline
 	 & \varphi_1-\varphi_2 & \varphi_2
 \end{array} 
\end{equation}
to the PDAE case. In the unconstrained case, i.e., for~$\dot v + \calAo v = \tilde f(t,v)$ in $\Vo^*$, one step of this method is defined through
\begin{subequations}
\label{eqn:secondOrder}
\begin{align}
  v_{n+1}^\text{Eul} &:= \varphi_0(-\tau\calAo)v_n + \tau\varphi_1(-\tau\calAo)\tilde f(t_n,v_n), \label{eqn:secondOrder:a} \\
  v_{n+1} &:= v_{n+1}^\text{Eul} + \tau\varphi_2(-\tau\calAo) \big[ \tilde f(t_{n+1},v_{n+1}^\text{Eul}) - \tilde f(t_n,v_n) \big]. \label{eqn:secondOrder:b}
\end{align}
\end{subequations}
Similarly as for the exponential Euler method, we will define a number of auxiliary problems in order to obtain an applicable method for parabolic systems with constraints.
%
\subsection{The practical method}\label{sec:secondOrd:method}
We translate the numerical scheme~\eqref{eqn:secondOrder} to the constrained case. Let~$u_n$ denote the given approximation of~$u(t_n)$. Then, the first step is to perform one step of the exponential Euler method, cf.~Algorithm~\ref{alg:expEuler}, leading to~$u_{n+1}^\text{Eul}$. Second, we compute~$w_n^\prime$ as the solution of the stationary problem  
\begin{subequations}
\label{eqn:second:wnA}	
\begin{alignat}{4}
	&\calA w_n'&\ +\ &\calB^\ast \nu_n'\ &=&\ f(t_{n+1},u_{n+1}^\text{Eul}) - \calB^- \dot g_{n+1} - f(t_n,u_n) + \calB^- \dot g_n  &&\qquad\text{in } \V^* ,\\
	&\calB w_n'& & &=&\ 0 &&\qquad\text{in } \Q^* 
\end{alignat}
\end{subequations}
and~$w_n''$ as the solution of 
\begin{subequations}
\label{eqn:second:wnB}	
\begin{alignat}{4}
	&\calA w_n''&\ +\ &\calB^\ast \nu_n''\ &=&\ \tfrac{1}{\tau} w_n' &&\qquad\text{in } \V^* ,\\
	&\calB w_n''& & &=&\ 0 &&\qquad\text{in } \Q^*.
\end{alignat}
\end{subequations}
Note that, due to the recursion formula~\eqref{eqn:phifunctions}, $w_n'$ and~$w_n''$ satisfy the identity 
\begin{align*}
  \tau\varphi_2(-\tau\calAo)\, \iota_0\big[ f(t_{n+1},u_{n+1}^\text{Eul}) - \calB^- \dot g_{n+1} - f(t_n,u_n) + \calB^- \dot g_n  \big] 
  &= - \varphi_1(-\tau\calAo) w_n' + w_n'  \\
  &= \varphi_0(-\tau\calAo)w_n'' - w_n'' + w_n'.   
\end{align*}
It remains to compute~$\varphi_0(-\tau\calAo)w_n''$ and thus, to solve a linear dynamical system with starting value~$w_n''$. More precisely, we consider the homogeneous system~\eqref{eqn:zn} on the time interval~$[t_n, t_{n+1}]$ with initial value $z(t_n) = w_n''$. The solution at time~$t_{n+1}$ then defines the new approximation by
\[
  u_{n+1} 
  := u^\text{Eul}_{n+1} + z(t_{n+1}) - w_n'' + w_n'. 
\] 
Note that the consistency is already guaranteed by the exponential Euler step which yields ~$\calB u_{n+1}  = \calB u^\text{Eul}_{n+1} = g_{n+1}$. The resulting exponential integrator is summarized in Algorithm~\ref{alg:secondOrd}. 
\begin{algorithm}
\setstretch{1.1}
\caption{A second-order exponential integrator}
\label{alg:secondOrd}
\begin{algorithmic}[1]
	\State {\bf Input}: step size~$\tau$, consistent initial data~$u_0\in\V$, right-hand sides $f$, $g$
	\vspace{0.5em}
	\For{$n=0$ {\bf to} $N-1$} 
	\State compute one step of the exponential Euler method for $u_n$ leading to $u^\text{Eul}_{n+1}$
	\State compute $\calB^-\dot g_n$ and $\calB^-\dot g_{n+1}$ by~\eqref{eqn:Binvers}
	\State compute~$w_n'$ by~\eqref{eqn:second:wnA} 
	\State compute~$w_n''$ by~\eqref{eqn:second:wnB} 
	\State compute $z$ as solution of~\eqref{eqn:zn} on~$[t_n,t_{n+1}]$ with initial condition~$z(t_n)=w_n''$ \label{alg:secondOrd:z}
	\State set $u_{n+1} = u^\text{Eul}_{n+1} + z(t_{n+1}) - w_n'' + w_n'$
	\EndFor 	
\end{algorithmic}
\end{algorithm}
%
%
\subsection{Convergence analysis}\label{sec:secondOrd:convergence}
In this subsection we aim to prove the second-order convergence of Algorithm~\ref{alg:secondOrd} when applied to parabolic PDAEs of the form~\eqref{eqn:PDAE}. For this, we examine two cases. 
First, we consider a nonlinearity with values in $\V$, i.e., we assume~$f\colon [0, T]\times\V\to\V$. Further, we assume~$\calA$ to be self-adjoint, meaning that~ $\calA_2=0$. Note that this may be extended to general~$\calA$ as mentioned in Remark~\ref{rem:symA}. In this case, the convergence analysis is based on the corresponding results for unconstrained systems. 
Second, we consider the more general case with nonlinearities~$f\colon [0, T]\times\V\to\cH$. Here, it can be observed that the convergence order drops to~$\sfrac 3 2$. Note, however, that this already happens in the pure PDE case. 
\begin{theorem}[Second-order scheme]
\label{thm:SecondOrdr}
In the setting of Section~\ref{sec:prelim:PDAE}, including Assumptions~\ref{assB} and \ref{assA}, we assume that~$\calA$ is self-adjoint and that~$f\colon [0, T]\times\V\to\V$ is two times Fr{\'e}chet differentiable in a strip along the exact solution~$u$ with uniformly bounded derivatives. Further we assume that the right-hand side~$g$ and~$u$ are sufficiently smooth, the latter with derivatives in~$\V$. Then, the approximation obtained by Algorithm~\ref{alg:secondOrd} is second-order accurate, i.e., 
\[
  \Vert u_n - u(t_n) \Vert_\V 
  \ \lesssim\ \tau^2.
\]
\end{theorem}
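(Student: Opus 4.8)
The proof should reduce the constrained second-order scheme to the unconstrained exponential integrator \eqref{eqn:secondOrder} acting on the kernel component, then invoke the known second-order PDE theory. The key observation is that, exactly as in the exponential Euler analysis, the computed iterate $u_{n+1}$ splits as $u_{n+1} = \uonn + \calB^- g_{n+1}$ where $\uonn \in \calVker$ is produced by applying the unconstrained scheme to the dynamical equation \eqref{eqn:uker}. Concretely, I would first verify that the auxiliary variables satisfy $w_n = \calAo^{-1}\iota_0[f(t_n,u_n)-\calB^-\dot g_n]$, $w_n' = \tau\,\calAo^{-1}\iota_0[\,\cdots\,]$ (the bracket in \eqref{eqn:second:wnA}), and $w_n'' = \calAo^{-1} w_n'/\tau$, so that the $\varphi$-recursion identity quoted before Algorithm~\ref{alg:secondOrd} holds in $\calVker$. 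Setting $\tilde f(t,v) := \iota_0[f(t, v+\calB^-g(t)) - \calB^-\dot g(t)]$ and $v_n := u_n - \calB^- g_n = \uon$, this shows that $\uonn = u_{n+1}-\calB^- g_{n+1}$ is precisely the output of \eqref{eqn:secondOrder:a}--\eqref{eqn:secondOrder:b} applied to $\tilde f$. Since $\calB u_{n+1} = g_{n+1}$ (noted in the construction), the full error $u_n - u(t_n)$ decomposes as $\uon - \uker(t_n)$, measured in the $\calV$-norm, which by \eqref{eqn:norm_equivalence} is equivalent to the $\calAo$-norm on $\calVker$.

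The second step is to check that the regularity hypotheses transfer to $\tilde f$ and the exact kernel solution $\uker$. Because $\calA$ is self-adjoint ($\calA_2=0$), $-\calAo$ generates an analytic semigroup on $\cHo$, and the assumption $f\colon[0,T]\times\calV\to\calV$ with two uniformly bounded Fréchet derivatives in a strip around $u$, together with sufficient smoothness of $g$ and of $u$ with derivatives in $\calV$, guarantees that $\tilde f(\cdot,\uker(\cdot))$ and its relevant time derivatives take values in $\calV = D(\calAo^{1/2})$ (up to the equivalence in \eqref{eqn:norm_equivalence}). This is exactly the smoothness budget required by the classical second-order result for exponential integrators of the form \eqref{eqn:butcher_second_order}, e.g.~\cite[Thm.~2.14 ff.]{HocO10} or the analysis underlying \cite[Exp.~11.2.2]{StrWP12}. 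I would state the error recursion in the analytic-semigroup framework: the local error of one step is $\calO(\tau^3)$ in the graph norm (here the crucial cancellation that distinguishes order $2$ from the reduced order $\sfrac32$ comes from the stiff order conditions being met when $\tilde f$ maps into $\calV$ rather than merely $\cHo$), and the stability of the error propagation follows from the uniform boundedness of $\varphi_0(-\tau\calAo)$ and the Lipschitz bound on $\tilde f$ inherited from the bounded first derivative of $f$.

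The final step assembles these into a global bound. A standard Lady-Windermere-fan argument accumulates the $N=t_n/\tau$ local errors against the bounded (contractive up to a Gronwall factor) semigroup, yielding $\|\uon - \uker(t_n)\|_{\calAo} \lesssim \tau^2$; re-expressing via \eqref{eqn:norm_equivalence} and adding the identically-matched $\calB^- g$ part gives $\|u_n - u(t_n)\|_\calV \lesssim \tau^2$. The constant depends on $t_n$, the Lipschitz and second-derivative bounds of $f$, the norm-equivalence constants $\mu, C$, and the semigroup bound.

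\textbf{Main obstacle.}
The genuine difficulty is the local-error analysis achieving the full order $2$ rather than $\sfrac32$. This hinges on showing that the defect of the scheme against the variation-of-constants formula \eqref{eqn:uexact} involves $\tilde f$ in the combination that the order conditions for \eqref{eqn:butcher_second_order} annihilate to one higher power of $\tau$; the gain requires the term $\varphi_2(-\tau\calAo)[\tilde f(t_{n+1},\cdot)-\tilde f(t_n,\cdot)]$ to absorb the leading Taylor remainder, and this only closes with the $\calV$-valued (equivalently $D(\calAo^{1/2})$) regularity of $\tilde f$ along the solution. Verifying that the strip hypotheses on $f$ and the smoothness of $g,u$ in $\calV$ actually deliver this regularity for the composite $\tilde f(\cdot,\uker(\cdot))$ — in particular that the correction $-\calB^-\dot g$ and the $\calA$-orthogonal part $\calB^- g$ are smooth enough in the right norm — is the technical heart and is where the analyticity of the semigroup generated by $-\calAo$ must be used to bound the stiff order terms uniformly in $\tau$.
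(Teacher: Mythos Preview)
Your proposal is correct and follows essentially the same route as the paper: decompose $u_n=\uon+\calB^-g_n$, identify the auxiliary saddle point variables with $\calAo^{-1}$ applied to the appropriate kernel data, recognize that Algorithm~\ref{alg:secondOrd} reproduces the unconstrained scheme~\eqref{eqn:secondOrder} for $\tilde f(t,v)=\iota_0[f(t,v+\calB^-g(t))-\calB^-\dot g(t)]$, and then invoke the known second-order result (the paper cites \cite[Th.~2.17]{HocO10}) together with the norm equivalence~\eqref{eqn:norm_equivalence}. One small slip: from~\eqref{eqn:second:wnA} one has $w_n'=\calAo^{-1}\iota_0[\,\cdots\,]$ without the factor~$\tau$ (the $\tau$ appears only when passing to $w_n''=\tfrac{1}{\tau}\calAo^{-1}w_n'$), but this does not affect the argument.
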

\begin{proof}
We reduce the procedure performed in Algorithm~\ref{alg:secondOrd} to the unconstrained case. For this, assume that $u_n = \uon + \calB^- g_n \in\V$ is given with~$\uon \in \Vo$ and that~$u^\text{Eul}_{n+1}$ denotes the outcome of a single Euler step, cf.~Algorithm~\ref{alg:expEuler}. By~$\uoEuler$ we denote the outcome of a Euler step for the unconstrained system 
\[
  \uodot(t) + \calAo\uo(t) 
  = \tilde f(t,\uo(t))
  \qquad\text{in }\Vo^*
\]
with~$\tilde f$ defined by~$\tilde f(t,\uo) := \iota_0\, [ f(t, \uo + \calB^- g(t)) - \calB^- \dot g(t)]$ and initial data~$\uon$. For this, we know that~$u^\text{Eul}_{n+1} = \uoEuler + \calB^- g_{n+1}$.  
By the given assumptions, it follows from~\cite[Th.~2.17]{HocO10} that 
\[
  \uonn 
  := \uoEuler + \tau\varphi_2(-\tau\calAo) \big[ \tilde f(t_{n+1},\uoEuler) - \tilde f(t_n, \uon) \big]
\]
defines a second-order approximation of~$\uo(t_{n+1})$. This in turn implies that 
\begin{align*}
  u_{n+1}
  &:= \uonn + \calB^-g_{n+1} \\
  &= u^\text{Eul}_{n+1} + \tau\varphi_2(-\tau\calAo) \big[ \tilde f(t_{n+1},\uoEuler) - \tilde f(t_n, \uon) \big] \\
  &= u^\text{Eul}_{n+1} + \tau\varphi_2(-\tau\calAo)\, \iota_0 \big[ f(t_{n+1},u^\text{Eul}_{n+1}) - \calB^- \dot g_{n+1} - f(t_n, u_n) + \calB^- \dot g_n \big]
\end{align*}
satisfies the error estimate
\[
  \Vert u_{n+1} - u(t_{n+1}) \Vert_\V 
  = \Vert \uonn - \uo(t_{n+1}) \Vert_\V 
  \lesssim \tau^2. 
\]
It remains to show that~$u_{n+1}$ is indeed the outcome of Algorithm~\ref{alg:secondOrd}. Following the construction in Section~\ref{sec:secondOrd:method}, we conclude that 
\[
  u_{n+1} = u^\text{Eul}_{n+1} + \varphi_0(-\tau\calAo)w_n'' - w_n'' + w_n'
\]
with~$w_n'$ and~$w_n''$ denoting the solutions of~\eqref{eqn:second:wnA} and~\eqref{eqn:second:wnB}, respectively. Finally, $\varphi_0(-\tau\calAo)w_n''$ is computed in line~\ref{alg:secondOrd:z} of Algorithm~\ref{alg:secondOrd} which completes the proof. 
\end{proof}
Up to now we have assumed that $f$ maps to $\V$, leading to the desired second-order convergence. In the following, we reconsider the more general case with~$f\colon [0,T] \times \calV \to \calH$. For PDEs it is well-known that the exponential integrator given by the Butcher tableau~\eqref{eqn:butcher_second_order} has a reduced convergence order if we assume~$\tfrac{\mathrm{d}^2}{\mathrm{d}t^2}  f(\cdot, u(\cdot)) \in L^\infty(0,T;\calH)$, cf.~\cite[Th.~4.3]{HocO05}. This carries over to the PDAE case. 
\begin{theorem}[Convergence under weaker assumptions on~$f$]
\label{thm:threehalfOrder}
Consider the assumptions from Theorem~\ref{thm:existence_uniqueness_PDAE} and let the step size~$\tau$ be sufficiently small such that the discrete solution~$u_n$ lies in a strip along~$u$, where~$f$ is locally Lipschitz continuous with a uniform constant~$L>0$. Further assume that~$g\in H^{3}(0,T;\Q^*)$. If the exact solution of~\eqref{eqn:PDAE} satisfies~$f(\, \cdot\, ,u(\, \cdot\, )) \in H^{2}(0,T;\cH)$, then the approximation~$u_n$ obtained by Algorithm~\ref{alg:secondOrd} satisfies the error bound 
\begin{align*}
 \Vert u_n - u&(t_n) \Vert_{\calV}^2\\
  \lesssim&\ \tau^3 \int_0^{t_n} \|\ddts f(t, u(t))\|^2_{\calH} + \| \calB^- \ddot g(t)\|^2_{\calH} \dt
  + \tau^4 \int_0^{t_n} \|\tfrac{\mathrm{d}^2}{\mathrm{d}t^2}  f(t, u(t))\|^2_{\calH} + \| \calB^- \dddot g(t)\|^2_{\calH}\dt.
\end{align*}
Note that the involved constant only depends on $t_n$, $L$, and the operator~$\calA$.
\end{theorem}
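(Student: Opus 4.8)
The plan is to mirror the direct argument in the proof of Theorem~\ref{thm:Euler}: on each subinterval $[t_n,t_{n+1}]$ I would build a continuous interpolant of the scheme that solves an inhomogeneous PDAE, bound its defect against the exact solution by Lemma~\ref{lem:PDAE_f_independent}, and close the recursion with Gronwall's inequality. Because $\calB^- g(t_{n+1}) = \calB^- g_{n+1}$, the complementary part cancels in the error $u(t_{n+1})-u_{n+1}$ and only the kernel dynamics contribute, exactly as in Theorem~\ref{thm:SecondOrdr}; unlike there, however, I keep the direct PDAE formulation in order to produce the explicit integral bound and to accommodate $\calA_2\neq 0$.

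First I would assemble the quantities of Algorithm~\ref{alg:secondOrd} into a single interpolant. Let $Z$ solve the homogeneous PDAE~\eqref{eqn:zn} on $[t_n,t_{n+1}]$ with $Z(t_n)=u_n-\calB^-g_n-w_n+w_n''$, i.e.\ the superposition of the two dynamical solves appearing in the embedded Euler step and in line~\ref{alg:secondOrd:z}. Set
\[
  U(t) := Z(t) + \calB^- g(t) + w_n - w_n'' + \tfrac{t-t_n}{\tau}\, w_n'.
\]
Using $\dot U = \dot Z + \calB^-\dot g + \tfrac1\tau w_n'$ together with the defining saddle point problems~\eqref{eqn:wn}, \eqref{eqn:second:wnA}, and~\eqref{eqn:second:wnB}, the two $\tfrac1\tau w_n'$ contributions cancel and one checks $U(t_n)=u_n$, $U(t_{n+1})=u_{n+1}$, and that $U$ solves
\[
  \dot U + \calA U + \calB^* \Lambda = Q_n(t), \qquad \calB U = g(t)
\]
with the affine interpolant
\[
  Q_n(t) = (1-\sigma)\, f(t_n,u_n) + \sigma\, f(t_{n+1},u^\text{Eul}_{n+1}) + \calB^-\big[\dot g(t) - (1-\sigma)\dot g_n - \sigma\, \dot g_{n+1}\big], \qquad \sigma := \tfrac{t-t_n}{\tau}.
\]
As in Theorem~\ref{thm:Euler}, the term $\calA\calB^- g$ produced along the way lies in $\Vo^0 = \image\calB^*$ by the definition of $\Vc$ and is absorbed into $\calB^*\Lambda$.

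Next I would set $\Du=u-U$, $\Dl=\lambda-\Lambda$, move $\calA_2\Du$ to the right-hand side so that the self-adjoint $\calA_1$ remains on the left, and invoke Lemma~\ref{lem:PDAE_f_independent} on each subinterval. The decisive step is to estimate the defect $D_n:=f(\cdot,u(\cdot))-Q_n$ in $\cH$. I would split it into the linear interpolation errors of $t\mapsto f(t,u(t))$ and of $\calB^-\dot g$, whose squared $L^2(t_n,t_{n+1};\cH)$-norms are $\lesssim \tau^4\int_{t_n}^{t_{n+1}}\big(\|\tfrac{\mathrm{d}^2}{\mathrm{d}s^2}f\|_\cH^2+\|\calB^-\dddot g\|_\cH^2\big)\ds$, and into the endpoint perturbations $(1-\sigma)[f(t_n,u(t_n))-f(t_n,u_n)]$ and $\sigma[f(t_{n+1},u(t_{n+1}))-f(t_{n+1},u^\text{Eul}_{n+1})]$, bounded via local Lipschitz continuity by $L\|\Du(t_n)\|_\V$ and $L\|u(t_{n+1})-u^\text{Eul}_{n+1}\|_\V$. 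Since $u^\text{Eul}_{n+1}$ is a single exponential Euler step started from $u_n$, the subinterval estimate already established in Theorem~\ref{thm:Euler} (with starting error $\Du(t_n)$) gives
\[
  \|u(t_{n+1})-u^\text{Eul}_{n+1}\|_\V^2
  \ \lesssim\ \|\Du(t_n)\|_\V^2 + \tau^2 \int_{t_n}^{t_{n+1}} \|\ddss f\|_\cH^2 + \|\calB^-\ddot g\|_\cH^2 \ds.
\]
Inserting this and using $\int_{t_n}^{t_{n+1}}\sigma^2\ds=\tau/3$ yields the leading contribution $\tau^3\int_{t_n}^{t_{n+1}}(\|\ddss f\|_\cH^2+\|\calB^-\ddot g\|_\cH^2)\ds$, the $\tau^4$ interpolation contribution, and propagation terms $C\tau\|\Du(t_n)\|_{\calA_1}^2$. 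This produces the recursion
\[
  \|\Du(t_{n+1})\|_{\calA_1}^2 \le e^{c\tau}\big[(1+C\tau)\|\Du(t_n)\|_{\calA_1}^2 + \mathrm{local}_n\big],
\]
and iterating with $1+x\le e^x$, $\Du(t_0)=0$, and the norm equivalence~\eqref{eqn:norm_equivalence} gives the claim.

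I expect the main obstacle to be the bookkeeping in the defect estimate, specifically the clean coupling of the embedded exponential Euler error $u(t_{n+1})-u^\text{Eul}_{n+1}$ to the correction stage: one must verify that this factor, which is only $O(\tau)$ in $\V$ and enters through the right endpoint of the affine interpolant weighted by $\sigma$, contributes precisely at order $\tau^3$ after squaring and integrating, so that the reduced order $\sfrac32$ (rather than $2$) emerges. By contrast, the interpolant assembly is mechanical, and the unboundedness of $\calA$ poses no difficulty, since the whole argument runs through the energy estimate of Lemma~\ref{lem:PDAE_f_independent} and never uses semigroup smoothing.
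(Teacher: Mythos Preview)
Your proposal is correct and follows essentially the same route as the paper: the interpolant $U$ you build coincides with the paper's $U^{\text{Eul}}(t)+z(t)-w_n''+\tfrac{t-t_n}{\tau}w_n'$ (your $Z$ is the superposition of the two homogeneous solves), the resulting right-hand side $Q_n$ is identical, and the paper likewise closes the recursion by invoking the Euler subinterval estimates~\eqref{eqn:estimate_Euler_0}--\eqref{eqn:estimate_Euler_n} for $\|u(t_{n+1})-u^{\text{Eul}}_{n+1}\|$ to extract the $\tau^3$ contribution. The bookkeeping you flag as the potential obstacle is exactly the step the paper carries out, and your outline of it is accurate.
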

\begin{proof} 
Let~$U^{\text{Eul}}$ be the function constructed in the proof of Theorem~\ref{thm:Euler} which satisfies	$U^{\text{Eul}}(t_n) = u_n$ and $U^{\text{Eul}}(t_{n+1}) = u^{\text{Eul}}_{n+1}$ and set $U(t):= U^{\text{Eul}}(t) + z(t)-w_n^{\prime \prime} + \frac{t-t_n}{\tau}w_n^\prime$. This function satisfies 
\begin{equation*}
U(t_n)=U^{\text{Eul}}(t_n)=u_n,\qquad\quad 
U(t_{n+1})=U^{\text{Eul}}(t_{n+1})+z(t_{n+1}) - w_n^{\prime \prime} + w_n^\prime=u_{n+1}.
\end{equation*}
Note that the estimates~\eqref{eqn:estimate_Euler_0} and~\eqref{eqn:estimate_Euler_n} are still valid if one replaces~$u_{n+1}$ by $U^{\text{Eul}}(t_{n+1})$ on the left-hand side of these estimates. As in the proof of Theorem~\ref{thm:Euler}, we can interpret~$U$ as the solution of a PDAE on $[t_n,t_{n+1}]$. The corresponding right-hand sides are then given by
\begin{equation*}
f(t_n,u_n) 
+ \tfrac{t-t_n}{\tau}\,\big(f(t_{n+1},U^\text{Eul}(t_{n+1})) - f(t_n,u_n)\big) 
+ \calB^-\big(\dot g (t) - \dot g_n - \tfrac{t-t_n}{\tau}\, (\dot g_{n+1}  - \dot g_n)\big)
\end{equation*}
for the dynamic equation and~$g(t)$ for the constraint.  
Then, by Young's inequality, Gronwall's lemma, and error bounds for the Taylor expansion we get
\begin{align*}
\| 
u(t_{n+1}) - u_{n+1} \|^2_{\calA_1} 
&\leq e^{c\tau}\, \Big[ (1+4 \tau \tfrac{L^2}{\mu})\, \|u(t_{n}) - u_{n}\|^2_{\calA_1} + 4\, \tau \tfrac{L^2}{\mu}\, \|(u - U^{\text{Eul}})(t_{n+1})\|^2_{\calA_1}\\
&\qquad + \tau^4 \int_{t_n}^{t_{n+1}} \tfrac{2}{15}\, \| \tfrac{\mathrm{d}^2}{\mathrm{d}t^2} f(t,u(t))\|_{\calH}^2 + \tfrac{2}{45}\, \| \calB^- \dddot{g}(t) \|_{\calH}^2 \dt \Big]
\end{align*}
with $c=2c^2_{\calA_2}{\mu}^{-1}$. 
The stated error bound then follows by an iterative application of the previous estimate together with the estimates~\eqref{eqn:estimate_Euler_0}, \eqref{eqn:estimate_Euler_n} and the norm equivalence of~$\|\cdot\|_{\calV}$ and~$\|\cdot\|_{\calA_1}$.
\end{proof}
The actual performance of the proposed scheme is presented in the numerical experiments of Section~\ref{sec:numerics}. We close this section with remarks on alternative second-order schemes. 
%
%
\subsection{A class of second-order schemes}
The analyzed scheme~\eqref{eqn:butcher_second_order} is a special case of a one-parameter family of exponential Runge-Kutta methods described by the tableau 
\begin{equation*}
\begin{array}{r|cc}
0 &  &  \\
c_2 & \varphi_{1,2} & \\
\hline
& \varphi_1-\tfrac{1}{c_2} \varphi_2 & \tfrac{1}{c_2}\varphi_2
\end{array} 
\end{equation*}
with positive parameter~$c_2>0$, cf.~\cite{HocO10}. Therein, $\varphi_1$ stands for $\varphi_1(-\tau\calAo)$, whereas~$\varphi_{1,2}$ is defined by~$\varphi_1(-c_2\tau\calAo)$. Obviously, we regain~\eqref{eqn:butcher_second_order} for $c_2=1$. 

For $c_2\neq 1$, the resulting scheme for constrained systems calls for two additional saddle point problems in order to compute~$\calB^-g(t_n+c_2 \tau)$ and~$\calB^-\dot g(t_n+c_2 \tau)$. This then leads to an exponential integrator summarized in Algorithm~\ref{alg:secondOrd:B} with the abbreviations
\[
g_{n,2} := g(t_n+c_2 \tau),\qquad 
\dot g_{n,2} := \dot g(t_n+c_2 \tau),\qquad 
t_{n,2} := t_n + c_2\tau. 
\]
\begin{algorithm}
	\setstretch{1.1}
	\caption{A class of second-order exponential integrators}
	\label{alg:secondOrd:B}
	\begin{algorithmic}[1]
		\State {\bf Input}: step size~$\tau$, consistent initial data~$u_0\in\V$, right-hand sides $f$, $g$
		\vspace{0.5em}
		\For{$n=0$ {\bf to} $N-1$} 
		\State compute $\calB^-g_n$, $\calB^- g_{n,2}$, $\calB^- g_{n+1}$, $\calB^- \dot g_n$, $\calB^- \dot g_{n,2}$, and $\calB^- \dot g_{n+1}$  by~\eqref{eqn:Binvers}
		\State compute $w_n$ by~\eqref{eqn:wn}
		\State solve~\eqref{eqn:zn} on~$[t_n,t_{n,2}]$ with initial condition~$z(t_n)=u_n -\calB^-g_n - w_n$
		\State set $u_{n,2} = z(t_{n,2})+w_n + \calB^-g_{n,2}$
		\State compute~$w_n'$ by~\eqref{eqn:second:wnA} with right-hand side \[\tfrac{1}{c_2}\, \big(f(t_{n,2},u_{n,2})-f(t_{n},u_{n}) - \calB^-\dot g_{n,2} + \calB^-\dot g_{n}\big)\]  
		\State compute~$w_n''$ by~\eqref{eqn:second:wnB} 
		\State solve~\eqref{eqn:zn} on~$[t_n,t_{n+1}]$ with initial condition~$z(t_n)=u_n -\calB^-g_n - w_n+w_n''$ 
		\State set $u_{n+1} = z(t_{n+1}) + w_n +  w_n' - w_n'' + \calB^-g_{n+1}. $
		\EndFor 	
	\end{algorithmic}
\end{algorithm}

We emphasize that all convergence results of Theorems~\ref{thm:SecondOrdr} and~\ref{thm:threehalfOrder} transfer to this family of second-order integrators. In a similar manner, Runge-Kutta schemes of higher order may be translated to the here considered constrained case. 

%
\section{Numerical Examples}\label{sec:numerics}
In this final section we illustrate the performance of the introduced time integration schemes for two numerical examples. The first example is a heat equation with nonlinear dynamic boundary conditions. In the second experiment, we consider the case of a non-symmetric differential operator for which the theory is not applicable.  

Since exponential integrators for PDAEs are based on the exact solution of homogeneous systems of the form~\eqref{eqn:zn}, we first discuss the efficient solution of such systems. 
%
%
\subsection{Efficient solution of homogeneous DAEs with saddle-point structure} 
This subsection is devoted to the approximation of~$z(t)$, which is needed in line~\ref{alg:expEuler_z} of Algorithm~\ref{alg:expEuler} and in line~\ref{alg:secondOrd:z} of Algorithm~\ref{alg:secondOrd}. Given a spatial discretization, e.g., by a finite element method, the PDAE~\eqref{eqn:zn} turns into a DAE of index $2$, namely 
\begin{subequations}
\label{eqn:DAE}	
\begin{alignat}{5}
  M\dot{x}(t)&\ +\ & A x(t)&\ +\ & B^T \lambda(t)\, &= 0, \label{eqn:DAEa}\\
  & & B x(t)& & &= 0 \label{eqn:DAEb}
\end{alignat}
\end{subequations}
with consistent initial value~$x(0)=x_0$, $Bx_0 = 0$. The matrices satisfy~$M,A\in \R^{n \times n}$ and~$B \in \R^{m \times n}$ with $m\leq n$. Here, the mass matrix $M$ is symmetric, positive definite and $B$ has full rank. The goal is to find an efficient method to calculate the solution~$x$ at a specific time point $t\in [0,T]$. 

Let us first recall the corresponding ODE case. There exist various methods to approximate the solution~$x(t)=e^{-At} x_0$ of the linear ODE~$\dot{x}(t) + Ax(t)=0$ with initial condition $x(0)=x_0$, $A\in \R^{n\times n}$, for an overview see~\cite{MolV78}. This includes Krylov subspace methods to approximate the action of the matrix exponential~$e^{-At}$ to a vector, see \cite{Saa92, HocL97, EieE06}, but also methods based on an interpolation of~$e^{-At} x_0$ by Newton polynomials~\cite{CalO09}. 
The first approach is based on the fact that the solution $e^{-At} x_0 = \sum_{k=0}^\infty \frac{1}{k!} (-At)^k x_0$ is an element of the Krylov subspace 
\begin{equation*}
\calK_n:=\calK_n(-A,x_0):=\operatorname{span} \{ x_0,-Ax_0,\ldots,(-A)^{n-1}x_0\}.
\end{equation*}
Now, we approximate $e^{-At} x_0$ by an element of~$\calK_r$ with~$r$ relatively small compared to~$n$. For this, we generate an orthogonal basis of~$\calK_r$ using the Arnoldi algorithm with $v_1 = {x_0}/{\|x_0\|}$ as initial vector. This yields~$-V_r^T A V_r = H_r$ with an isometric matrix $V_r \in \R^{n\times r}$ and an upper Hessenberg matrix $H_r\in \R^{r\times r}$. Since the columns of $V_r$ are orthonormal and span~$\calK_r$, $H_r$ is the orthogonal projection of $-A$ onto $\calK_r$. Therefore, it is reasonable to use the approximation
\begin{equation*}
e^{-At}x_0 
\approx \|x_0\|\, V_r\,  e^{H_r t} e_1
\end{equation*}
with unit basis vector $e_1 \in \R^r$, cf.~\cite{HocL97}. We like to emphasize that the Arnoldi algorithm does not use the explicit representation of~$A$ but only its action onto a vector.

We return to the DAE case~\eqref{eqn:DAE}. By~\cite[Th.~2.2]{EmmM13} there exists a matrix~$X \in \R^{n\times n}$ such that the solution~$x$ of~\eqref{eqn:DAE} with arbitrary consistent initial value~$x_0$ is given by $x(t) = e^{Xt}x_0$. Furthermore, there exists a matrix-valued function $\Lambda\in C^\infty([0,\infty);\R^{m\times n})$ with $\lambda(t)=\Lambda(t)x_0$. To calculate the action of $X$ we note that by~\eqref{eqn:DAEb} also $B\dot x =0$ holds. 
We define~$y:=Xx_0$ and $\mu := \Lambda(0)x_0$. Then with equation~\eqref{eqn:DAEa}, $B\dot x =0$, and $t \to 0^+$ we get
\begin{subequations}
\label{eqn:saddlepoint_problem_Krylov}
\begin{alignat}{5}
  M y&\ +\ & B^T \mu \, &= -A x_0, \label{eqn:saddlepoint_problem_Krylov_a}\\
  B y& & &= 0. \label{eqn:saddlepoint_problem_Krylov_b}
\end{alignat}
\end{subequations}
Since the solution of~\eqref{eqn:saddlepoint_problem_Krylov} is unique, its solution~$y$ describes the action of $X$ applied to~$x_0$. As a result, we can approximate the solution of the DAE~\eqref{eqn:DAE} in an efficient manner by using $x(t)=e^{Xt}x_0$, the saddle point problem~\eqref{eqn:saddlepoint_problem_Krylov}, and Krylov subspace methods. For the numerical experiments we have adapted the code provided in~\cite{NieW12}. 
\begin{remark}
Given an approximation~$x_t \approx x(t)$, the solution $\mu$ of~\eqref{eqn:saddlepoint_problem_Krylov} with right-hand side~$-A x_t$ provides an approximation of the Lagrange multiplier~$\lambda(t)$. 
\end{remark}
\begin{remark}
Since the saddle point problem~\eqref{eqn:saddlepoint_problem_Krylov} has to be solved several times in every time step, the numerical solution~$\tilde{x}$ of~\eqref{eqn:DAE} may not satisfy the constraint~\eqref{eqn:DAEb} due to round-off errors. To prevent a drift-off,  one can project~$\tilde{x}$ onto the kernel of~$B$ -- by solving an additional saddle point problem. 
\end{remark}

\subsection{Nonlinear dynamic boundary conditions}
In this first example we revisit Example~\ref{exp:dynBC} and consider the linear heat equation with nonlinear dynamic boundary conditions, cf.~\cite{SprW10}. More precisely, we consider the system 
\begin{subequations}
\label{eqn:dynBC}
\begin{align}
  \dot  u - \kappa\, \Delta u &= 0 \phantom{f_\Gamma(t,u)} 
  \quad\text{in } \Omega:=(0,1)^2, \label{eqn:dynBC:a} \\
  \dot u + \partial_n u + \alpha\, u  &= f_\Gamma(t,u) \phantom{0}
  \quad \text{on }	\Gamma_\text{dyn} := (0,1) \times \{0\} \label{eqn:dynBC:b} \\
  u &= 0 \phantom{f_\Gamma(t,u)} 
  \quad\text{on } \Gamma_D := \partial\Omega \setminus \Gamma_\text{dyn}  \label{eqn:dynBC:c}
\end{align}
\end{subequations}
with $\alpha=1$, $\kappa=0.02$, and the nonlinearity~$f_\Gamma(t,u)(x) = 3\cos(2\pi t) - \sin(2\pi x) - u^3(x)$. As initial condition we set~$u(0) = u_0 = \sin(\pi x) \cos(5\pi y/2)$. Following~\cite{Alt19}, we can write this in form of a PDAE, namely as 
\begin{subequations}
\label{eqn:dynBC:PDAE}
\begin{align}
	\begin{bmatrix} \dot u \\ \dot p  \end{bmatrix}
	+ \begin{bmatrix} \calK &  \\  & \alpha\, \end{bmatrix}
	\begin{bmatrix} u \\ p  \end{bmatrix}
	+ \calB^*\lambda 
	&= \begin{bmatrix} 0 \\ f_\Gamma(t, p) \end{bmatrix} \qquad \text{in } \V^*, \\
	\calB\, \begin{bmatrix} u \\ p  \end{bmatrix} \phantom{i + \calB \lambda} &= \phantom{[]} 0\hspace{5.55em} \text{in } \Q^* \label{eqn:dynBC:PDAE:b}
\end{align}
\end{subequations}
with spaces $\V = H^1_{\Gamma_D}(\Omega) \times H^{1/2}(\Gamma_\text{dyn})$, $\cH = L^2(\Omega) \times L^2(\Gamma_\text{dyn})$, $\Q = [H^{1/2}(\Gamma_\text{dyn})]^*$ and constraint operator $\calB(u,p)=u|_{\Gamma_\text{dyn}} - p$. Here, $p$ denotes a dummy variable modeling the dynamics on the boundary~$\Gamma_\text{dyn}$. The constraint~\eqref{eqn:dynBC:PDAE:b} couples the two variables~$u$ and~$p$. 
This example fits into the framework of this paper with~$g\equiv 0$. Further, the nonlinearity satisfies the assumptions of the convergence results in Theorems~\ref{thm:Euler} and~\ref{thm:threehalfOrder} due to well-known Sobolev embeddings, see~\cite[p.~17f]{Rou05}. 

For the simulation we consider a spatial discretization by bilinear finite elements on a uniform mesh with mesh size~$h=1/128$. The initial value of $p$ is chosen in a consistent manner, i.e., by $u_0|_{\Gamma_\text{dyn}}$. An illustration of the dynamics is given in Figure~\ref{fig:dynBC:up}. 
The convergence results of the exponential Euler scheme of Section~\ref{sec:Euler} and the exponential integrator introduced in Section~\ref{sec:secondOrd} are displayed in Figure~\ref{fig:dynBC:conv} and show first and second-order convergence, respectively. Note that the theory only ensures an order $\sfrac{3}{2}$ for the second integrator. However, the spatial discretization acts as a regularization in space, i.e., we expect order $\sfrac{3}{2}$ only for~$h\to 0$ and non-smooth initial data. 

Finally, we note that the computations remain stable for very coarse step sizes~$\tau$, since we do not rely on a CFL condition here. 
%
\definecolor{colInitial}{RGB}{113,104,238} 
\definecolor{colP}{RGB}{189,26,26} 
\begin{figure}
\includegraphics{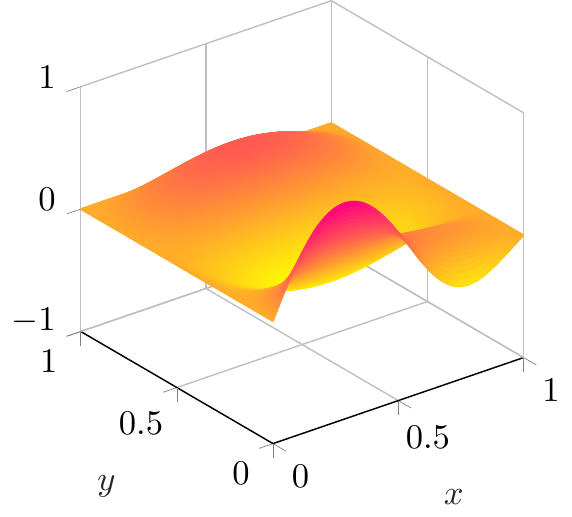}
\input{pics/dynBC_p_h76_tau001}
\caption{Illustration of the solution $(u, p)$. The left figure shows~$u$ at time $t=0.7$, whereas the right figure includes several snapshots of~$p$ in the time interval~$[0, 0.7]$. The dashed line shows the initial value of~$p$. Both results are obtained for mesh size~$h=1/128$ and step size~$\tau = 1/100$.
}
\label{fig:dynBC:up}
\end{figure}
%
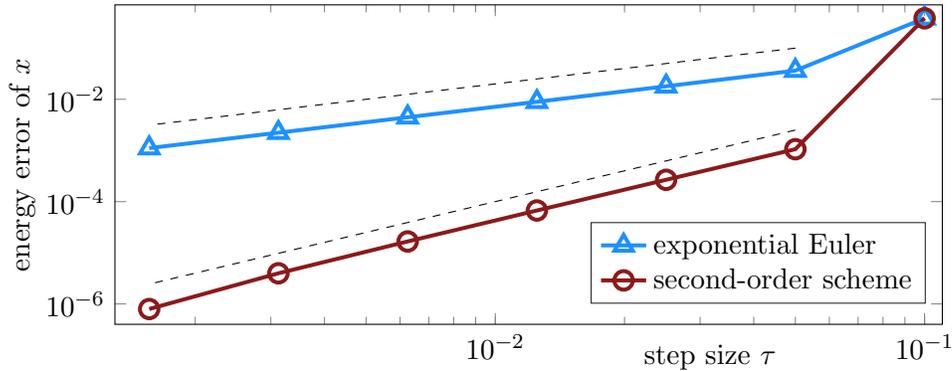
\begin{figure}
%
%
\begin{tikzpicture}

\begin{axis}[%
width=11.0cm,
height=4.25cm,
at={(0.758in,0.481in)},
scale only axis,
xmode=log,
xmin=0.0013,
xmax=0.11,
xminorticks=true,
xlabel style={below=-0.9mm, right=1.4cm},
ymode=log,
ymin=4e-07,
ymax=0.7,
xlabel=step size $\tau$,
ylabel=energy error of $x$,
yminorticks=true,
axis background/.style={fill=white},
legend style={legend cell align=left, align=left, draw=white!15!black, at={(0.98,0.32)}}
]
\addplot [color=myBlue3, mark=triangle, mark size=4pt, line width=1.5pt]
  table[row sep=crcr]{%
0.1	0.38253811462875\\
0.05	0.0363270323489102\\
0.025	0.0179574531005322\\
0.0125	0.00892824264638325\\
0.00625	0.00445177192981783\\
0.003125	0.00222292004597674\\
0.0015625	0.00111080741637044\\
};
\addlegendentry{exponential Euler}

\addplot [color=myRed3, mark=o, mark size=3.5pt, line width=1.5pt]
  table[row sep=crcr]{%
0.1	0.38253811462875\\
0.05	0.00105744574100574\\
0.025	0.000267045347071342\\
0.0125	6.70901376784542e-05\\
0.00625	1.66535487464527e-05\\
0.003125	3.97558064005907e-06\\
0.0015625	7.96205270439963e-07\\
};
\addlegendentry{second-order scheme}

\addplot [color=black, dashed]
table[row sep=crcr]{%
	0.05	0.1\\
	0.0016   0.0032\\
};

\addplot [color=black, dashed]
table[row sep=crcr]{%
	0.05	0.0025\\
	0.0016   0.00000256\\
};

\end{axis}
\end{tikzpicture}%
\caption{Convergence history for the error in~$x=[u; p]$, measured in the (discrete) $\A$-norm. The dashed lines show first and second-order rates. }
\label{fig:dynBC:conv}
\end{figure}
%
%
\subsection{A non-symmetric example}
In this final example we consider a case for which Assumption~\ref{assA} is not satisfied. More precisely, we consider the coupled system 
%
\begin{align*}
	\dot u - \partial_{xx} u - \partial_{xx} v &= -u^3 \qquad\text{in } (0,1), \\
	\dot v + u - \partial_{xx} v  &= -v^3 \hspace{0.04em}\qquad \text{in } (0,1)
\end{align*}
%
with initial value 
$$u_0(x) = v_0(x) = \sum_{k=1}^\infty \frac{\sin(k \pi x)}{k^{1.55}}$$
 and the constraint $u(t,1)-v(t,1) = g(t) = e^{2t}-1$. At the other boundary point~$x=0$ we prescribe homogeneous Dirichlet boundary conditions. In this example, the operator $\calA$ has the form $-[ \partial_{xx},\, \partial_{xx};\, - \id,\, \partial_{xx}]$. Thus, the non-symmetric part~$\calA_2$ includes a second-order differential operator which contradicts Assumption~\ref{assA}. As a consequence, non of the convergence results in this paper apply.

The numerical results are shown in Figure~\ref{fig:nonsym:conv}, using a finite element discretization with varying mesh sizes~$h$. One can observe that the exponential Euler scheme still converges with order $1$, whereas the second-order scheme introduced in Section~\ref{sec:secondOrd} clearly converges with a reduced rate. Moreover, the rate decreases as the mesh size~$h$ gets smaller. 
By linear regression one can approximate the convergence rate as a value between~$1.40$ (coarsest mesh, $h=1/32$) and $1.34$ (finest mesh, $h=1/256$). Thus, the convergence rate is strictly below~$\sfrac 3 2$. 
%
\begin{figure}
%
%
\definecolor{myColor1}{rgb}{0.89, 0.88, 0.11}%
\definecolor{myColor2}{rgb}{0.90, 0.61, 0.15}%
\definecolor{myColor3}{rgb}{0.80, 0.25, 0.12}%
\definecolor{myColor4}{rgb}{0.61, 0.11, 0.03}%
\begin{tikzpicture}

\begin{axis}[%
width=11.0cm,
height=5.75cm,
at={(0.747in,0.477in)},
scale only axis,
xmode=log,
xmin=8e-05,
xmax=0.005,
xminorticks=true,
xlabel style={below=-0.9mm, right=-2.0cm},
ymode=log,
ymin=2e-06,
ymax=9e-03,
xlabel=step size $\tau$,
ylabel=$H^1$-error of $x$,
yminorticks=true,
axis background/.style={fill=white},
legend style={legend cell align=left, align=left, draw=white!15!black, at={(0.98,0.42)}}
]
\addplot[color=myColor1, line width=1.5pt]
table[row sep=crcr]{%
0.05	0.0799793403789066\\
0.025	0.0439617033825785\\
};
\addlegendentry{$h=1/32$}
\addplot[color=myColor2, line width=1.5pt]
table[row sep=crcr]{%
0.05	0.0802008665291415\\
0.025	0.0441254206556253\\
};
\addlegendentry{$h=1/64$}
\addplot [color=myColor3, line width=1.5pt]
table[row sep=crcr]{%
0.05	0.0802388511933204\\
0.025	0.0441558789092972\\
};
\addlegendentry{$h=1/128$}
\addplot[color=myColor4, line width=1.5pt]
table[row sep=crcr]{%
0.05	0.0802437669449783\\
0.025	0.0441620999329646\\
};
\addlegendentry{$h=1/256$}

\addplot[color=myColor1, mark=triangle,mark size=4pt, line width=1.5pt]
  table[row sep=crcr]{%
0.05	0.0799793403789066\\
0.025	0.0439617033825785\\
0.0125	0.0208048429373345\\
0.00625	0.00902710583471356\\
0.003125	0.00371418298993378\\
0.0015625	0.0017607974420459\\
0.00078125	0.000861324080293846\\
0.000390625	0.00042638794805749\\
0.0001953125	0.000212163045361146\\
9.765625e-05	0.000105829156566564\\
};

\addplot[color=myColor2, mark=triangle,mark size=4pt, line width=1.5pt]
  table[row sep=crcr]{%
0.05	0.0802008665291415\\
0.025	0.0441254206556253\\
0.0125	0.020920683365349\\
0.00625	0.00911044786187226\\
0.003125	0.0037335910506488\\
0.0015625	0.00176879192800965\\
0.00078125	0.000864938215926925\\
0.000390625	0.000428069918799447\\
0.0001953125	0.000212980086252099\\
9.765625e-05	0.000106232105218543\\
};

\addplot [color=myColor3, mark=triangle,mark size=4pt, line width=1.5pt]
  table[row sep=crcr]{%
0.05	0.0802388511933204\\
0.025	0.0441558789092972\\
0.0125	0.0209402392217409\\
0.00625	0.00912226978441421\\
0.003125	0.0037415234852887\\
0.0015625	0.00177025048264824\\
0.00078125	0.000865629091580593\\
0.000390625	0.000428397993490292\\
0.0001953125	0.000213141203045606\\
9.765625e-05	0.000106311146925543\\
};

\addplot[color=myColor4, mark=triangle, mark size=4pt, line width=1.5pt]
  table[row sep=crcr]{%
0.05	0.0802437669449783\\
0.025	0.0441620999329646\\
0.0125	0.0209441304079959\\
0.00625	0.00912421750759777\\
0.003125	0.00374245006551437\\
0.0015625	0.00177054855887525\\
0.00078125	0.000865772890189205\\
0.000390625	0.000428467222665489\\
0.0001953125	0.000213175598263249\\
9.765625e-05	0.000106328162044826\\
};

\addplot [color=myColor1, mark=o, mark size=3.5pt, line width=1.5pt]
  table[row sep=crcr]{%
0.05	0.0194293386936316\\
0.025	0.00942799552482282\\
0.0125	0.00419454340054839\\
0.00625	0.00173158174828716\\
0.003125	0.000675570670020443\\
0.0015625	0.000249218754801343\\
0.00078125	8.64137005415434e-05\\
0.000390625	2.89514066082455e-05\\
0.0001953125	1.05831643476437e-05\\
9.765625e-05	3.5205599930691e-06\\
};

\addplot[color=myColor2, mark=o, mark size=3.5pt, line width=1.5pt]
  table[row sep=crcr]{%
0.05	0.0194896271130602\\
0.025	0.00947280275813653\\
0.0125	0.00422712569625351\\
0.00625	0.00175796859398999\\
0.003125	0.000698034554833445\\
0.0015625	0.00026774453295176\\
0.00078125	9.92974476831029e-05\\
0.000390625	3.534947590794e-05\\
0.0001953125	1.19879617842484e-05\\
9.765625e-05	3.97711674248949e-06\\
};

\addplot [color=myColor3, mark=o, mark size=3.5pt, line width=1.5pt]
  table[row sep=crcr]{%
0.05	0.0195018461078098\\
0.025	0.00948089770948322\\
0.0125	0.00423184739705542\\
0.00625	0.00176104445627439\\
0.003125	0.000700369711508355\\
0.0015625	0.000269889461863043\\
0.00078125	0.000101418097653568\\
0.000390625	3.72919987859201e-05\\
0.0001953125	1.3393e-05\\
9.765625e-05	4.6603e-06\\
};

\addplot[color=myColor4, mark=o, mark size=3.5pt, line width=1.5pt]
  table[row sep=crcr]{%
0.05	0.019504687860311\\
0.025	0.00948265980627946\\
0.0125	0.0042326814071096\\
0.00625	0.00176142074225775\\
0.003125	0.000700513499197293\\
0.0015625	0.000269959142820367\\
0.00078125	0.000101504240897599\\
0.000390625	3.74269703520746e-05\\
0.0001953125	1.35826622223156e-05\\
9.765625e-05	4.86451118182703e-06\\
};

\addplot [color=black, dashed] 
  table[row sep=crcr]{%
0.05	0.0375\\
1e-04	7.5e-05\\
};

\addplot [color=black, dashed] 
  table[row sep=crcr]{%
0.05	0.0559016994\\ 
1e-04	1.58113883e-06\\ 
};
\end{axis}
\end{tikzpicture}%
\caption[~]{Convergence history for the error in~$x=[u; v]$, measured in the (discrete) $H^1(0,1)$-norm, including Dirichlet boundary conditions in $x=0$. The graphs show the results of the exponential Euler scheme~(\tikz{\node[mark size=3.5pt,line width=1.5pt] at (0,0) {\pgfuseplotmark{triangle}};}) and the second order scheme (\tikz{ \node[mark size=3pt,line width=1.5pt] at (0,0) {\pgfuseplotmark{o}};}) for different values of~$h$, displayed by its color. The dashed lines illustrate the orders $1$ and~$\sfrac 3 2$, respectively.}
\label{fig:nonsym:conv}
\end{figure}
A deeper analysis with fractional powers of~$\calA$ may predict the exact convergence rate, cf.~\cite[Th.~4.2 \& Th.~4.3]{HocO05}. However, this is a task for future work.
%
%
\section{Conclusion}\label{sec:conclusion}
In this paper, we have introduced a novel class of time integration schemes for semi-linear parabolic equations restricted by a linear constraint. For this, we have combined exponential integrators for the dynamical part of the system with (stationary) saddle point problems for the 'algebraic part' of the solution. As a result, we obtain exponential integrators for constrained systems of parabolic type for which we have proven convergence of first and second order, respectively. Numerical experiments illustrate the obtained results.
%
%
\newcommand{\etalchar}[1]{$^{#1}$}

\end{document}